\definecolor{dark-red}{rgb}{0.7,0.25,0.25}
\definecolor{dark-blue}{rgb}{0.15,0.15,0.55}
\definecolor{medium-blue}{rgb}{0,0,0.65}
 \newcommand\PA{\mathcal{P}'}
  \newcommand\C{\mathbb{C}}
\newtheorem{thm}{Theorem} 
\newtheorem{defn}{Definition} 
\newtheorem{ex}{Example} 
\newtheorem{fig}{Figure}
\newtheorem{lem}{Lemma}
\begin{document}
\bibliographystyle{amsplain}
\title{A Diagrammatic Multivariate Alexander Invariant of Tangles}
\author{K. Grace Kennedy}
\email{kgracekennedy@math.ucsb.edu}
\maketitle

\begin{abstract}
Recently, Bigelow defined a diagrammatic method for calculating the Alexander polynomial of a knot or link by resolving crossings in a planar algebra.  I will present my multivariate version of Bigelow's calculation. The advantage to my algorithm is that it generalizes to a multivariate tangle invariant up to Reidemeister I.  I will conclude with a possible link to subfactor planar algebras from the work of Jones and Penneys.
\end{abstract}

\section{Introduction}
\label{Introduction}

Ever since their introduction in Jones's \emph{Planar Algebras I} \cite{JonesPA}, planar algebras have been linked to knot invariants.  Here we present a planar algebra that we will use to describe the multivariate Alexander polynomial.  It is the same planar algebra that Bigelow used for the single variable Alexander polynomial \cite{Bigelow}.  Alexander discovered what would become known as the Alexander polynomial of a knot and published it in his 1928 paper ``Topological Invariants of Knots and Links."  He included the description of a skein relation under Section 12 ``Miscellaneous theorems" \cite{Alex}.  In 1969, Conway rediscovered the skein relation
$$\begin{matrix}{}
\includegraphics[scale=.4]{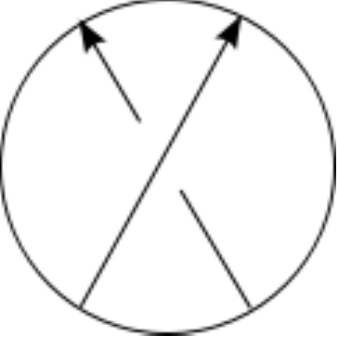}
\end{matrix}
- \begin{matrix}{}
\includegraphics[scale=.4]{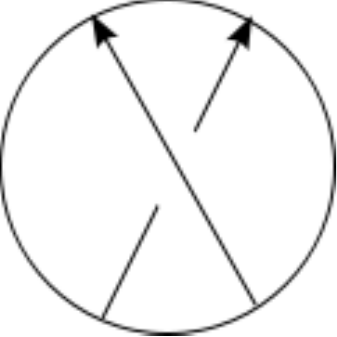}
\end{matrix}=(q-q^{-1}) \begin{matrix}{}
\includegraphics[scale=.4]{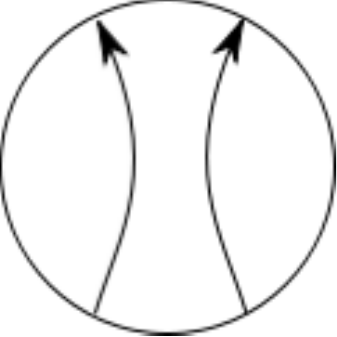}
\end{matrix}
\hskip.03in,
$$
which could be used for both identifying when a knot invariant was equivalent to Alexander's and for calculating the invariant by making local changes to crossings.  A year later, Conway discovered the Conway potential function, or the multivariate Alexander polynomial \cite{Con1970}, which is more effective at distinguishing links than the single variable invariant.  In 1993, Murakami published a list of axioms for the multivariate Alexander polynomial in his paper ``A State Model for the Multi-variable Alexander Polynomial" \cite{JMur}.  This contribution was analogous to the discovery of the skein relation for the single variable version.  One could then determine if a multivariate link invariant was the one defined by Conway and could also calculate the invariant using the axioms to make local changes.

Archibald generalized the multivariate Alexander polynomial to a tangle invariant and published it in her thesis \cite{ArchThesis}.  In 2010, Bigelow presented a single variable, diagrammatic tangle invariant that is also a generalization of the Alexander polynomial \cite{Bigelow}.  For his method, one considers oriented knots and links as 1-tangles with the single unclosed strand having endpoints on the boundary of a disk.  This represents the knot or link formed by the closure of the 1-tangle.  The knot invariant is calculated by sending knots and links as 1-tangles into a certain planar algebra.  The algorithm is not specific to 1-tangles, so it generalizes easily to tangles with more than one unclosed strand.  In this paper, we present a multivariate version of Bigelow's algorithm.  There is strong evidence to suggest that our multivariate tangle invariant is the same as the one defined by Archibald \cite{ArchThesis}.

Definitions of the planar algebra we use and the tangle planar algebra are in Section \ref{PA}.  The planar algebra that we will define is the Motzkin planar algebra defined by Jones \cite{VJMotzkin}.  The Motzkin algebra was also recently studied by Benkart and Halverson in \cite{MA} and is related to the planar rook algebra in Flath, Halverson, and Herbig \cite{PRA} and Bigelow, Ramos, and Yi \cite{AJP}.  The goal of Section \ref{PA} is to give the reader a good understanding of an unshaded planar algebra in order to follow the rest of the paper or to go on to read about shaded planar algebras and subfactor planar algebras.  Not all of the background is essential for following the rest of the paper but is included to give some context to the planar algebra presented here.  For more on shaded planar algebras and subfactors, see \cite{JonesPA}.  References \cite{VJMotzkin} and  \cite{FreeProbGJS} each provide a good, condensed explanation of shaded planar algebras.

Section \ref{Algorithm} will be a presentation of a multivariate version of Bigelow's algorithm for the Alexander polynomial.  Sections \ref{TI} and \ref{MVAP} will be dedicated to verifying that this is indeed a knot invariant and in fact the same multivariate knot invariant defined by Conway in 1970.  In Section \ref{Conc}  we conclude with how to extend this multivariate Alexander polynomial to an invariant of tangles and some ideas for future research, including a connection to infinite index subfactors from the work of Jones and Penneys \cite{Dave}.

I would like to thank my advisor, Professor Stephen Bigelow for his help and guidance and also the math department at the University of California, Santa Barbara.  Thank you to David Penneys for pointing out how this research relates to subfactor theory.  Thank you to Michael Polyak for pointing me to his paper on a minimal generating set of Reidemeister moves, which shortened the proof that our algorithm is a link invariant.  And thank you to Cardiff University and the Marie Curie Training Network for funding during the write up of these results.

\section{Three planar algebras}
\label{PA}

Shaded planar algebras were originally introduced by Vaughan Jones to study subfactors \cite{JonesPA}, although there are several papers that give definitions of planar algebras that do not involve functional analysis  \cite{MPS}, \cite{FreeProbGJS}.  Here we only need the unshaded definition given in \cite{MPS}.

First, we define a planar tangle or a planar tangle diagram.  A \emph{planar tangle} is a disk with endpoints marked along the boundary and $r\geq 0$ internal disks each with $k_l, l=1,2,\ldots r$ endpoints.  Strings with no crossings connect the endpoints marked on the disks.  There is also a marked point on the boundary of each internal and external disk.  For instance, Example \ref{ExPT} provides two examples of planar tangles.

\begin{ex}
\label{ExPT}
$$
\begin{matrix}{}
\includegraphics[scale=.4]{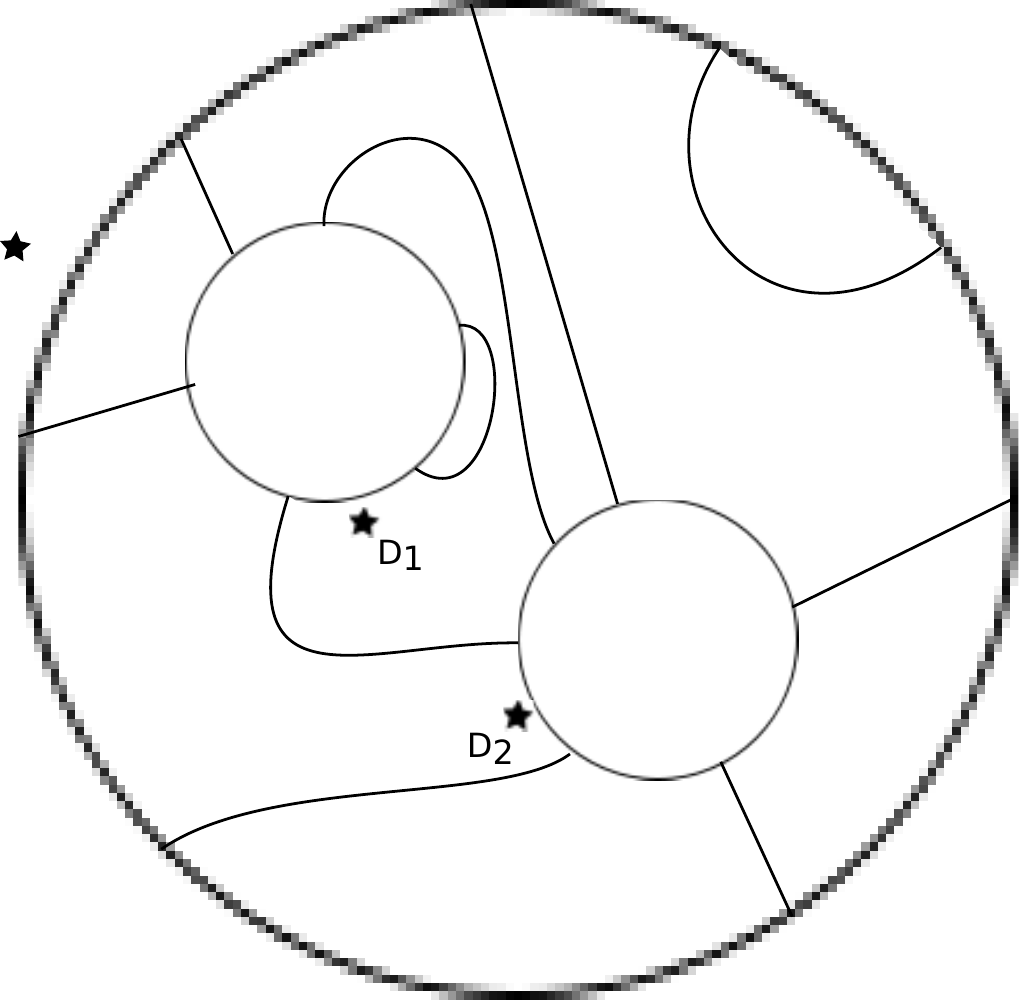}
\end{matrix}
\hskip.1in
\textrm{ or }
\hskip.1in
\begin{matrix}{}
\includegraphics[scale=.4]{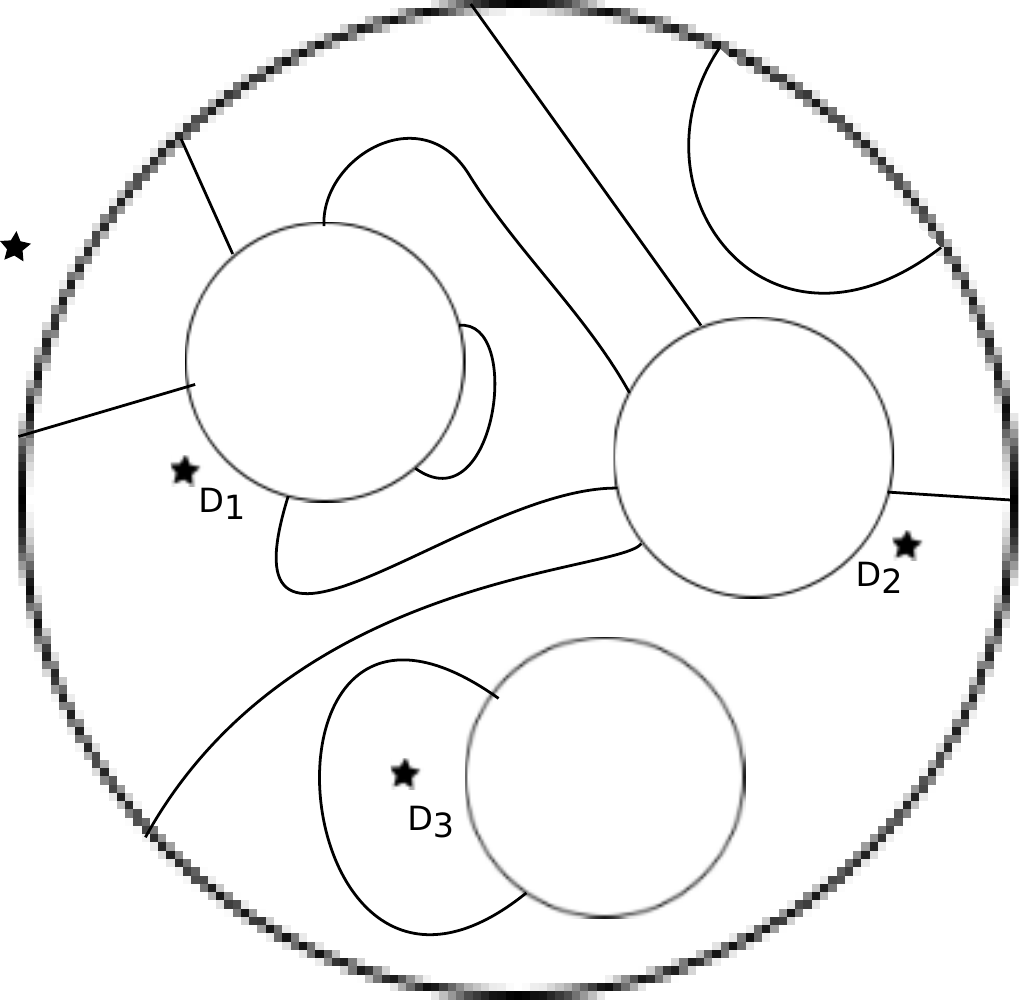}
\end{matrix}
$$
\end{ex}

\begin{defn}\label{DefPA}
An \emph{unshaded planar algebra} $\mathcal{P}$ is a sequence of vector spaces $\{P_k\}_{k=0}^{\infty}$ along with an assignment for every planar tangle to a linear map between tensor products of the vector spaces.  A planar tangle with $K$ endpoints on the outer boundary of the diagram and $r\geq 0$ internal disks with $k_l, l=1,2,\ldots r$ endpoints on the boundaries represents a linear map from $P_{k_1}\otimes\ldots\otimes P_{k_l}$ to $P_K$.  Composition of maps is associative as long as the maps are composable.\end{defn}

The set of planar tangles defines the \emph{planar operad}.  The first planar tangle in Example \ref{ExPT} represents a linear map from $P_6\otimes P_6$ to $P_8$.  The second planar tangle represents a linear map from $P_6\otimes P_5\otimes P_2$ to $P_7$.  It is frequently helpful to think of the basis vectors of the various vector spaces as pictures that can be glued into the planar tangles.  The Temperley-Lieb planar algebra is a good example to keep in mind.

\begin{ex}
\label{ExTLPA}
The \emph{Temperley-Lieb planar algebra} has vector spaces $P_{2n}=TL_{2n}$, the Temperley-Lieb algebra with $n-$strands, and $P_{2n+1}=0$ for $n\geq 0$.  The planar tangles act on the basis vectors of each $TL_{2n}$ by gluing the diagrams into the tangles, lining up the marked points and strands, and erasing the boundary of each internal disk.
\end{ex}

Most people first see the Temperley-Lieb diagrams drawn in boxes that can be stacked on top of one another like the braid group \cite{West}, \cite{Abram}.  The diagrams in $TL_{2n}$ have $2n$ endpoints on the boundary of a disk (or box) and $n$ strands that connect the endpoints without crossings up to isotopy.  A closed strand can be deleted at the expense of scaling the diagram by a constant.

Stacking Temperley-Lieb diagrams on top of one another can also be done in planar algebras using the multiplication tangle:
$$
\begin{matrix}{}
\includegraphics[scale=.4]{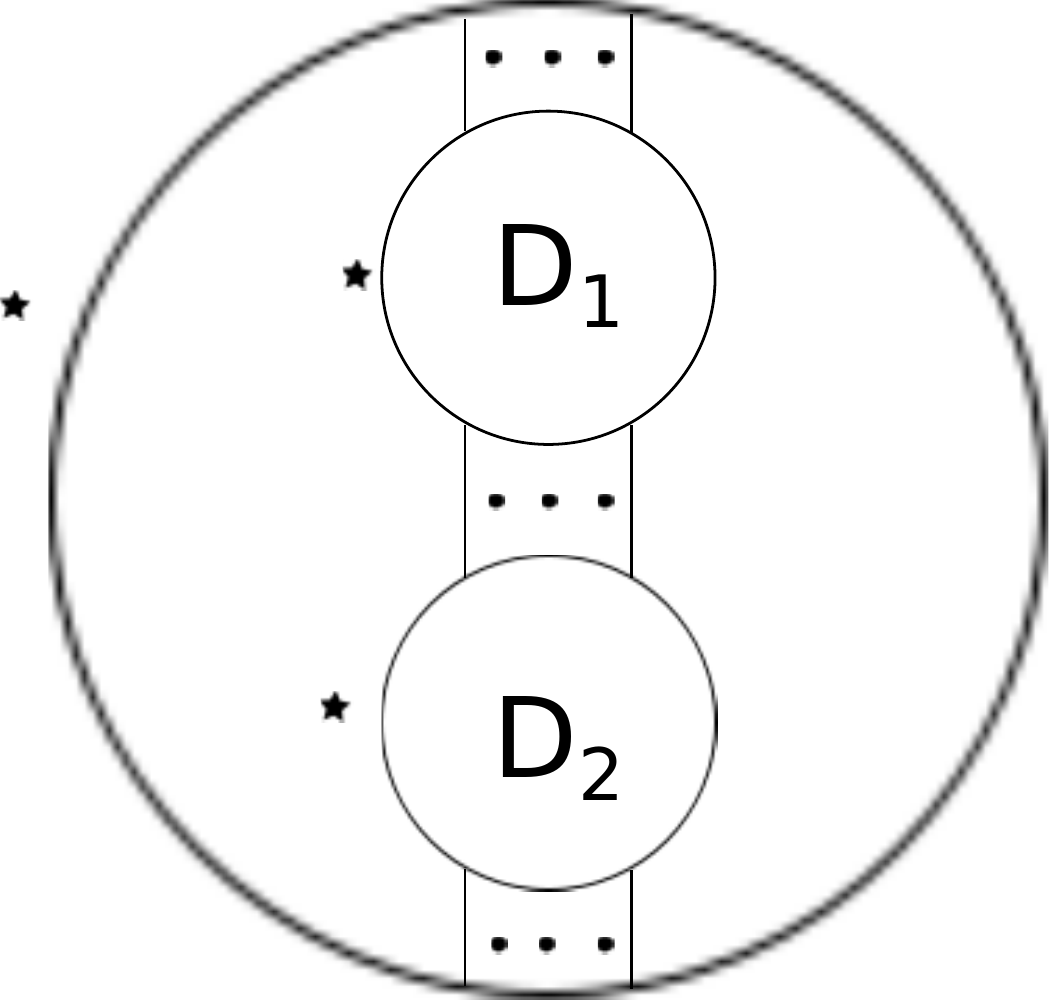}
\end{matrix}
\hskip.03in.
$$

In a general planar algebra, the multiplication tangle is an operation from $P_{2n}\otimes P_{2n}$ to $P_{2n}$.  There is also a dual tangle and a trace tangle (below respectively):

$$
\begin{matrix}{}
\includegraphics[scale=.4]{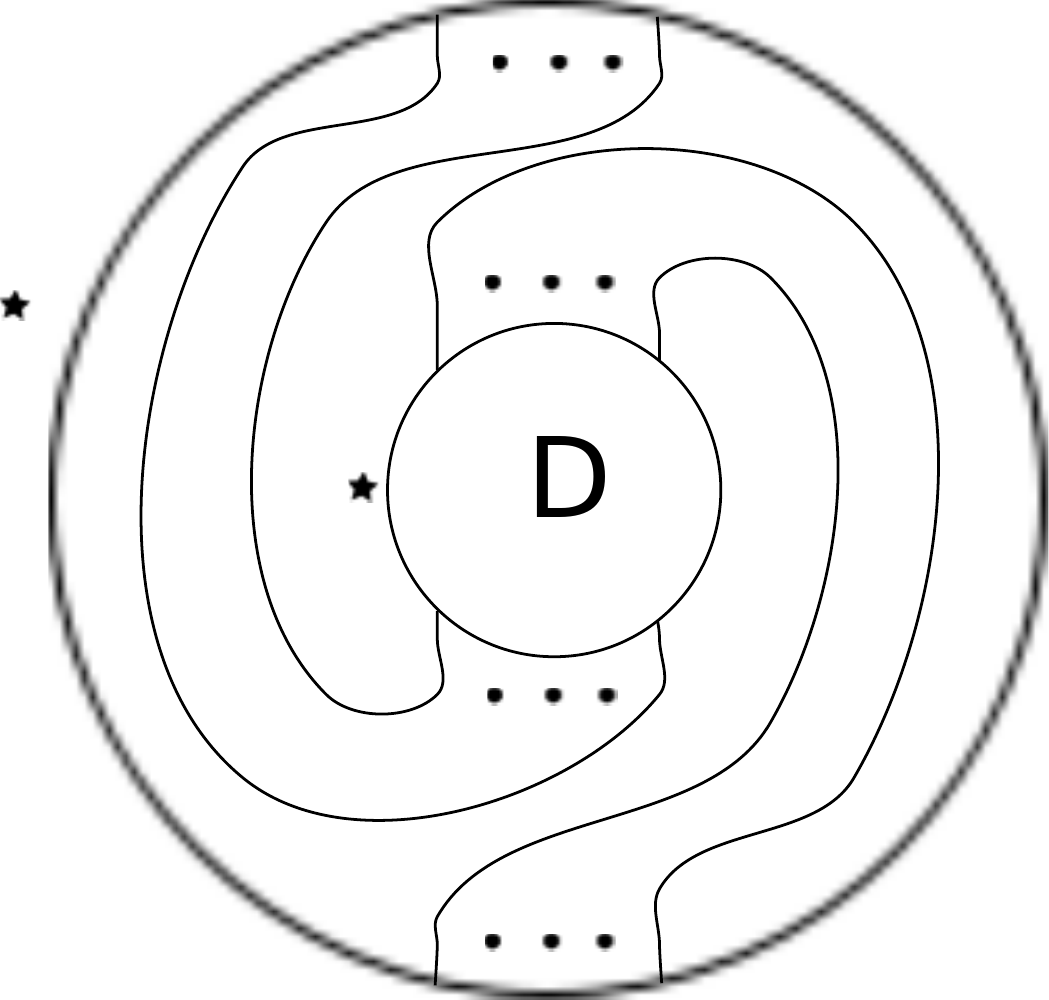}
\end{matrix}
\hskip.1in
\textrm{ and }
\hskip.1in
\begin{matrix}{}
\includegraphics[scale=.4]{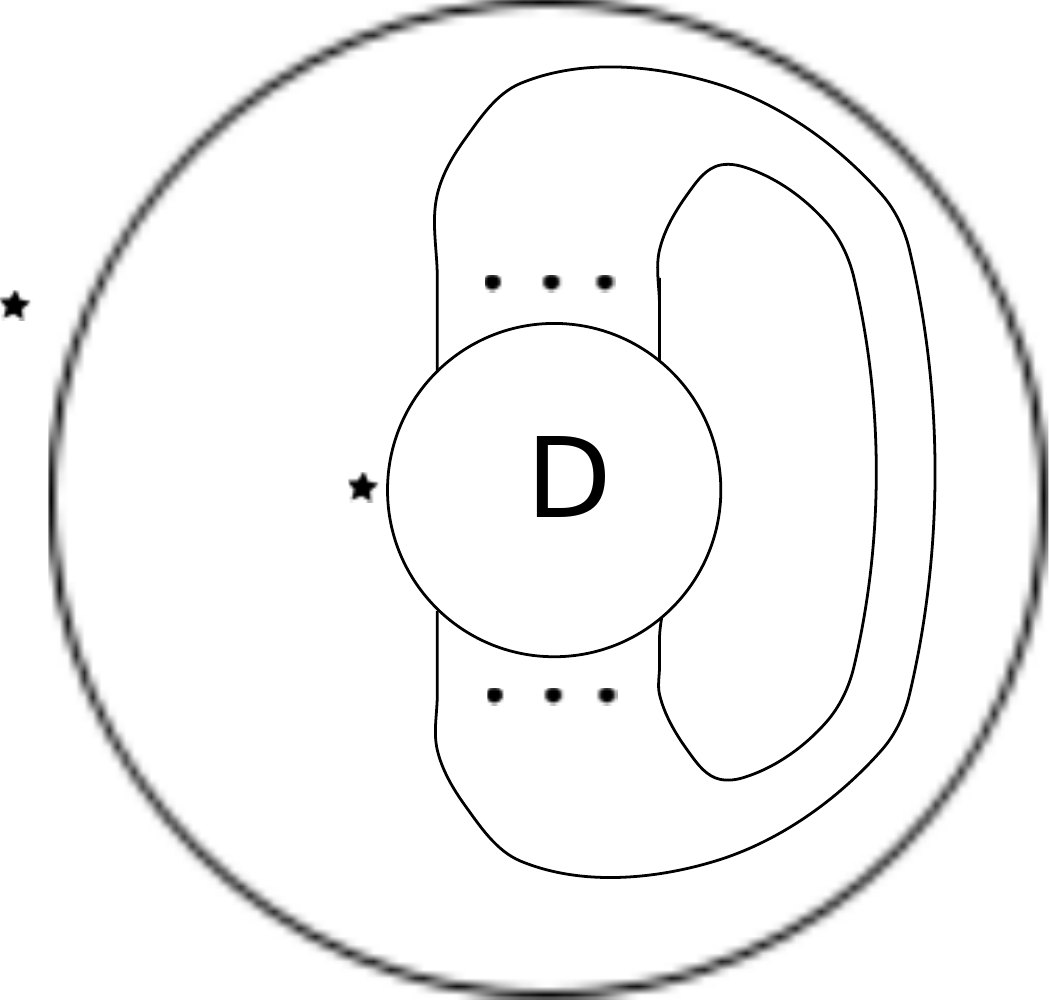}
\end{matrix}
\hskip.03in.
$$
The first planar tangle is a map from $P_{2n}$ to itself, which corresponds to rotation by $\pi$ radians.   The second is a map from from $P_{2n}$ to $P_0$.

If $P_0$ is one dimensional, then the trace tangle is a map into $\C$, and together with an adjoint equivalent to horizontal flipping can be used to define a sesquilinear form.  In the special case of a subfactor planar algebra, this form must be positive definite.  Also for a subfactor planar algebra, all of the vector spaces must be finite dimensional, and the odd numbered vector spaces must be zero.  Finally, subfactor planar algebras must be \emph{spherical}, that is to say
$$
\begin{matrix}{}
\includegraphics[scale=.4]{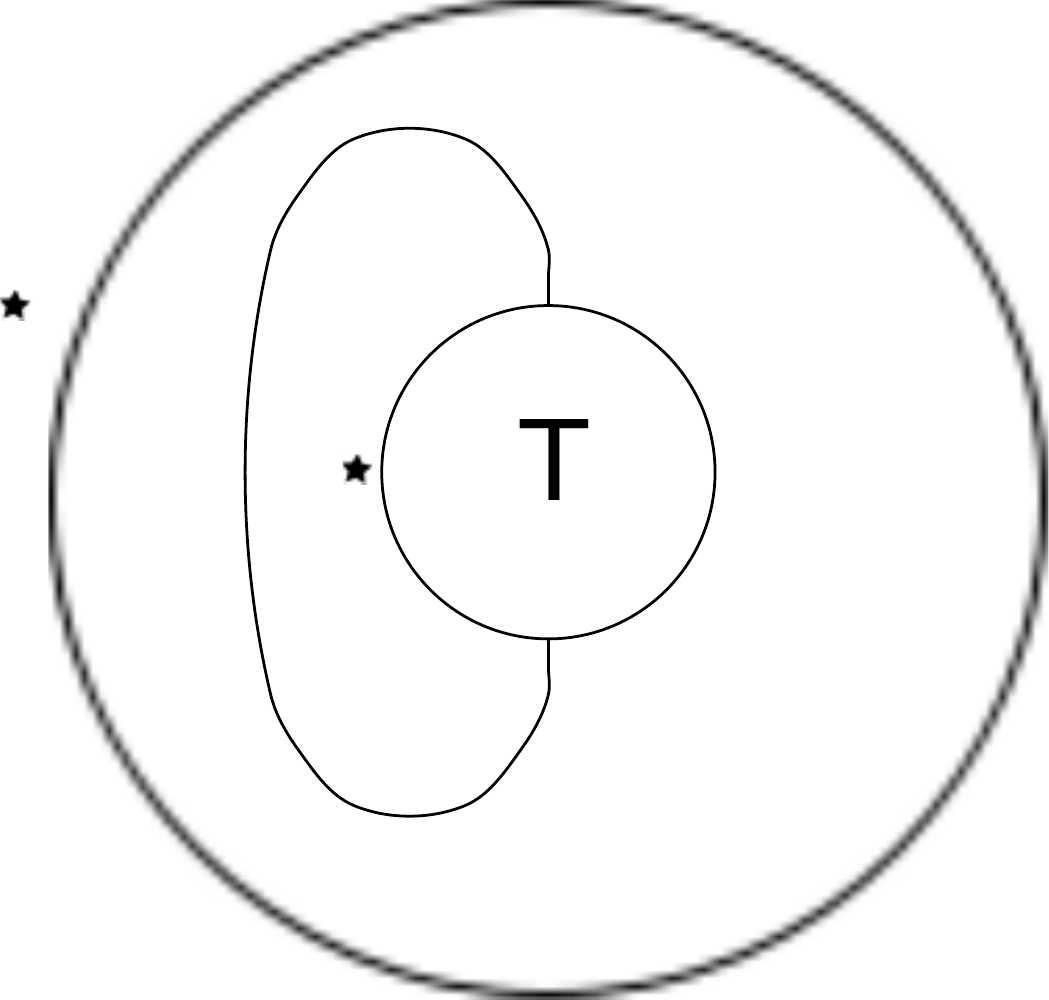}
\end{matrix}
\hskip.1in
=
\hskip.1in
\begin{matrix}{}
\includegraphics[scale=.4]{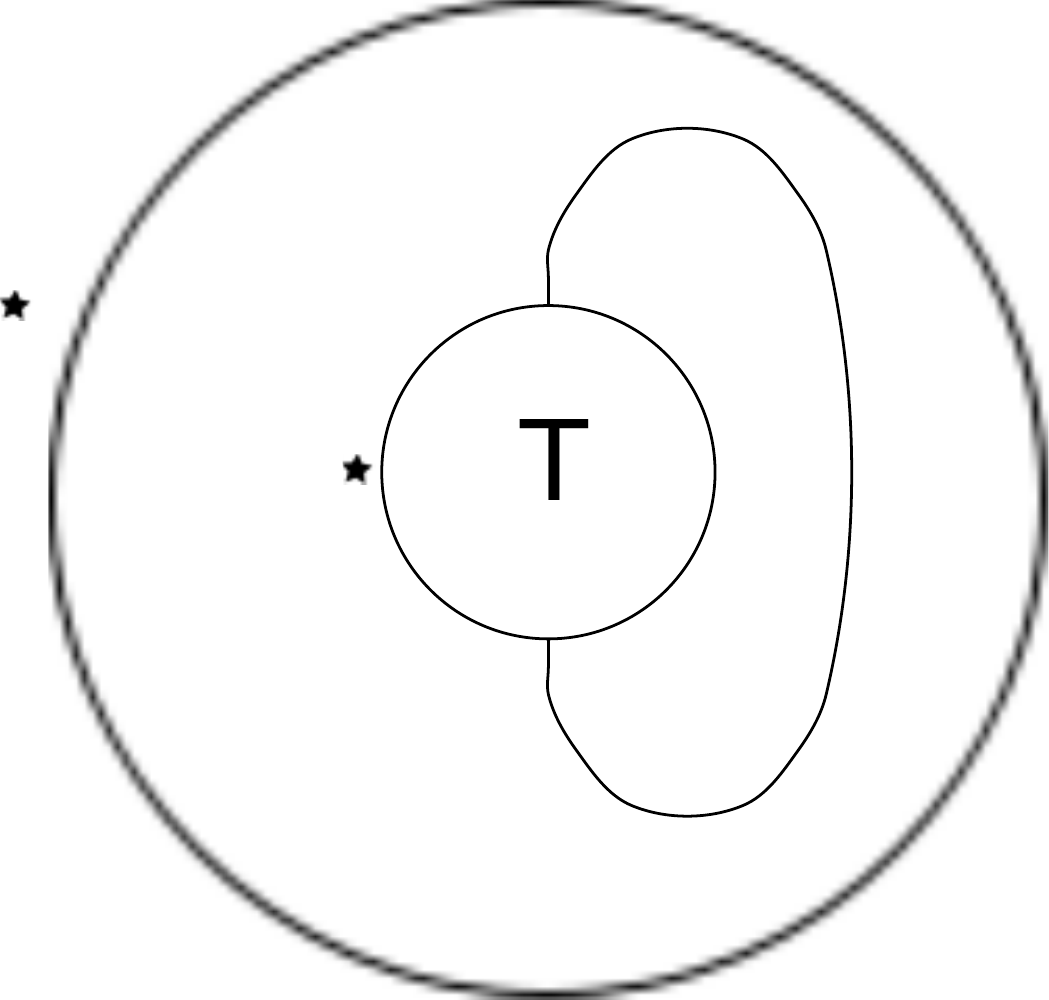}
\end{matrix}
\hskip.03in.
$$

As subfactor planar algebras are shaded, they only have nonzero vector spaces that glue into diagrams with an even number of endpoints on the boundary.  Papers that only include diagrams with an even number of endpoints frequently take $P_n$ to be the vector space with $2n$ endpoints on the boundary.

The planar algebra generated by an element belonging to a specific vector space, $P_k$, has vector spaces that contain all linear combinations of all of the diagrams that can be generated by the planar operations.  Below we will see two examples of such planar algebras.  The planar tangles will always generate diagrams that look like the Temperley-Lieb diagrams.  If there is the relation that a closed loop is equivalent to a constant times the empty diagram, there is a copy of the Temperley-Lieb algebra in the planar algebra.

\begin{ex}
\label{ExTanPA}
The \emph{tangle planar algebra}, $\mathcal{T}$ is defined similarly to the Temperley-Lieb planar algebra except that we allow crossings.  The vector spaces are $P_{2n}=T_{2n}$, the tangle algebra with $n$ unclosed strands, and $P_{2n+1}=0$ for $n\geq 0$.  The planar tangle diagrams act on the basis vectors of each $T_{2n}$ by gluing tangles into the internal disks, lining up the marked points and strands, then erasing the boundary of each internal disk.
\end{ex}

One could think of the tangle planar algebra as being the planar algebra generated by two elements in $P_4$, the positive and negative crossings, modulo the Reidemeister moves.  Adding skein relations to these equivalences renders evaluating a closed diagram equivalent to calculating a knot invariant \cite{JonesPA}.

\begin{defn}
Let $\PA$ be the planar algebra generated by a single element in $P_1$:
$$
\begin{matrix}{}
\includegraphics[scale=.4]{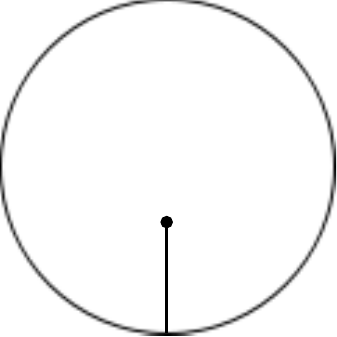}
\end{matrix}
$$
with relations
 in $P_0$ and $P_4$ respectively
$$
\begin{matrix}{}
\includegraphics[scale=.4]{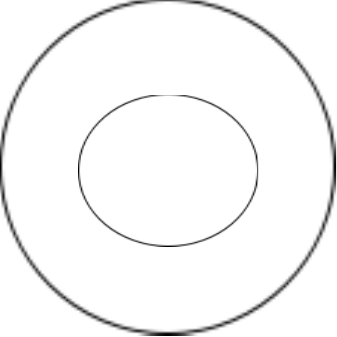}
\end{matrix}=0,
\hskip.1in
\begin{matrix}{}
\includegraphics[scale=.4]{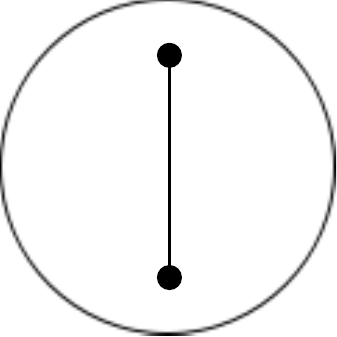}
\end{matrix}
=
\begin{matrix}{}
\includegraphics[scale=.4]{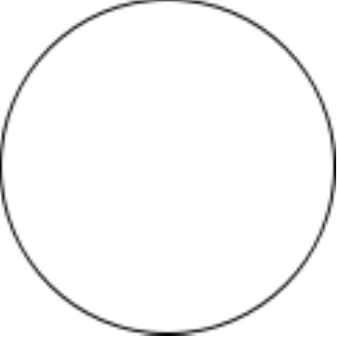}
\end{matrix}, \textrm{ and }
\hskip.1in
\begin{matrix}{}
\includegraphics[scale=.4]{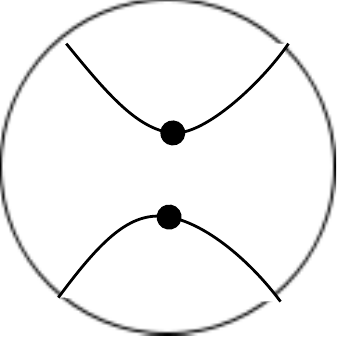}
\end{matrix}
+
\begin{matrix}{}
\includegraphics[scale=.4]{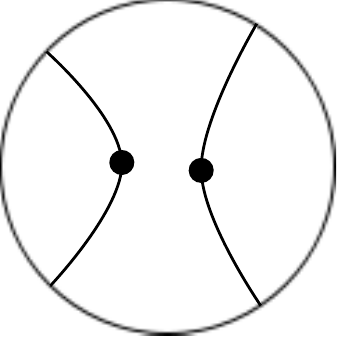}
\end{matrix}
=0.
$$
To ease notation, these relations make use of the definition of the dotted strand in Definition \ref{DefDottedStrand} (below).
\begin{defn}
\label{DefDottedStrand}
Define the dotted strand in $P_2$ as
$$\begin{matrix}{}
\includegraphics[scale=.4]{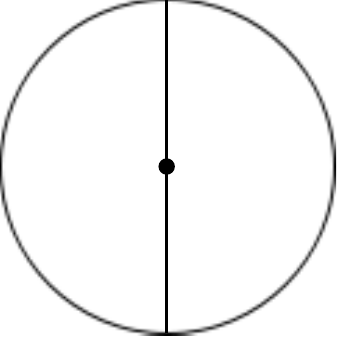}
\end{matrix}
=
\begin{matrix}{}
\includegraphics[scale=.4]{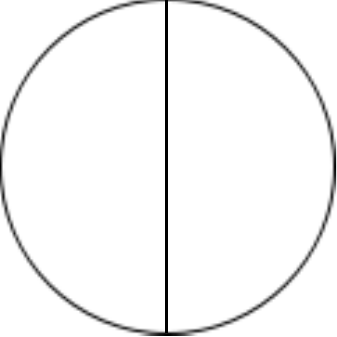}
\end{matrix}
-
\begin{matrix}{}
\includegraphics[scale=.4]{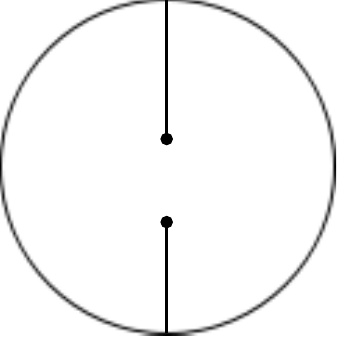}
\end{matrix}
\hskip.03in.
$$
\end{defn}
\end{defn}

So the vector space $P_k$ has basis vectors indexed by disks with $k$ endpoints on the boundary and strings with no crossings and one or two endpoints on the boundary.  For the basis vectors, we require all strands with two endpoints on the boundary be dotted.  Requiring that there be no dots gives another basis.  The vector spaces $P_k$ are finite dimensional.  Since a closed loop is equal to zero, $P_0$ is 1-dimensional with every non-zero element equal to a multiple of the empty diagram.  However, the inner product is not positive definite, and $P_1$ being non-zero lets us know immediately this is not a subfactor planar algebra.

The definition of the dotted strand is included to make the last relation less cumbersome to write.  It would otherwise include eight terms.  This notation gives the following two relations given in Bigelow's Lemma 3.4 \cite{Bigelow}:

\begin{lem}
\label{LemBig3.4}
Where the internal dot is as in Definition \ref{DefDottedStrand}, we have the following two relations:
$$\begin{matrix}{}
\includegraphics[scale=.4]{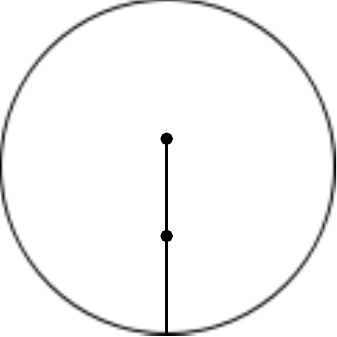}
\end{matrix}
=0
$$
and
$$
\begin{matrix}{}
\includegraphics[scale=.4]{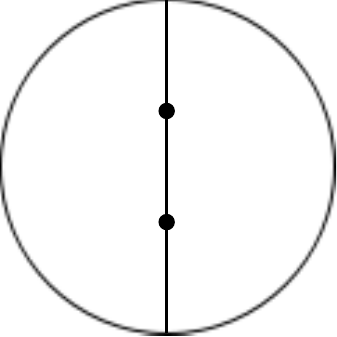}
\end{matrix}
=
\begin{matrix}{}
\includegraphics[scale=.4]{DottedStrand.pdf}
\end{matrix}
\hskip.03in.
$$
\end{lem}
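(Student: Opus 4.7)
The plan is to derive both identities by expanding each dotted strand using Definition \ref{DefDottedStrand} and reducing the resulting sum with the defining relations of $\PA$, together with the linearity of planar operations.

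For the first identity, apply Definition \ref{DefDottedStrand} to each of the two dots appearing on the broken strand, writing every dot as the difference of a single strand and a broken strand. Binomial expansion produces four planar diagrams. Each of these either contains a closed sub-loop, which vanishes by the first defining relation of $\PA$ (closed loop equals zero), or can be rewritten using the $P_4$ relations; the surviving terms then cancel in pairs to give $0$.

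For the second identity, expand the two dots on the through-strand via Definition \ref{DefDottedStrand}, obtaining
\[
\text{(strand with two dots)} \;=\; S \,-\, 2B \,+\, B \cdot B,
\]
where $S$ denotes the single strand (the identity of $P_2$) and $B$ denotes the broken strand. The claim then reduces to verifying that $B \cdot B = B$. Composing two broken strands via the multiplication tangle yields the broken strand together with an interior floating arc whose two endpoints are $P_1$-generators; evaluating this arc as an element of $P_0$ via the relations of $\PA$ shows it contributes a factor of $1$, so that $B \cdot B = B$. Substituting gives $S - 2B + B = S - B$, which is precisely the one-dotted strand.

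The main obstacle is the identification $B \cdot B = B$, and in particular the scalar evaluation of the interior arc with two $P_1$-generators as an element of $P_0$. This step uses the defining relation in $P_0$ in tandem with one of the $P_4$ relations to pin down the constant in the one-dimensional space $P_0$. Once this scalar is in hand, the remaining work in both identities is bookkeeping with planar composition and linear algebra in $\PA$.
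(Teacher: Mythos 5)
The paper itself offers no proof of this lemma: it is quoted verbatim from Bigelow's Lemma~3.4 and justified only by citation, so your proposal has to stand on its own. Your reduction of the second identity is the right skeleton: writing the dotted strand as $S-B$, squaring to get $S-2B+B\cdot B$, and reducing everything to the single claim that $B\cdot B=B$, equivalently that the floating arc with a $P_1$-generator at each end evaluates to $1\cdot(\text{empty diagram})$ in $P_0$. But that scalar evaluation is the entire content of the lemma, and you assert it rather than derive it. The one relation of $\PA$ that lives in $P_0$ says a \emph{closed loop} is zero; the floating arc is not a closed loop (it has two univalent endpoints), so that relation says nothing about it, and you do not say which $P_4$ relation you would cap off, or how, to force the constant to be $1$ rather than $0$ or $-1$. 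If that constant were any $c\neq 1$, both identities of the lemma would fail, since $D^2=S-(2-c)B$ and $D\circ(\text{generator})=(1-c)(\text{generator})$. So this step cannot be filed under ``bookkeeping''; it is the one thing that has to be checked against the defining relations.

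The treatment of the first identity has a concrete error on top of this. Expanding the internal dot(s) on a strand that terminates in endpoints never produces a closed loop: every arc that appears carries at least one univalent $P_1$-generator, so nothing ``vanishes by the closed-loop relation,'' and there is no pairwise cancellation of surviving terms. What actually happens is exactly the same computation as in the second identity: the dot composed with the generator gives $(\text{generator})-c\,(\text{generator})$ where $c$ is the floating-arc scalar, and the expression is zero precisely because $c=1$. I would rewrite the proof so that both identities are explicitly corollaries of the single lemma ``floating arc $=$ empty diagram,'' and then actually establish that lemma from the stated relations of $\PA$.
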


The planar algebra $\PA$ was defined by Bigelow to give the diagrammatic algorithm for the Alexander polynomial in \cite{Bigelow}.  It was also defined in Halverson and Benkart's ``Motzkin Algebras" in \cite{MA} and as a specific example of a planar algebra by Jones in \cite{VJMotzkin}.  It has also come up in the theory of infinite index subfactors in the work of Jones and Penneys\cite{Dave}, \cite{DaveInfInd}, which we will briefly discuss in Section \ref{Conc}.  In the following section, we will outline how to send tangles into this planar algebra and retrieve the multivariate Alexander polynomial.

\section{Algorithm and theorem}
\label{Algorithm}

We will think of oriented knots and links as existing in the second vector space of the tangle planar algebra, the vector space of tangles with only one unclosed strand.  Each component has an orientation and is assigned a color, or variable.  The knot or link we are considering is the one we get from making the obvious closure.

\begin{fig}
\label{FigTE}
$$
\includegraphics[scale=.6]{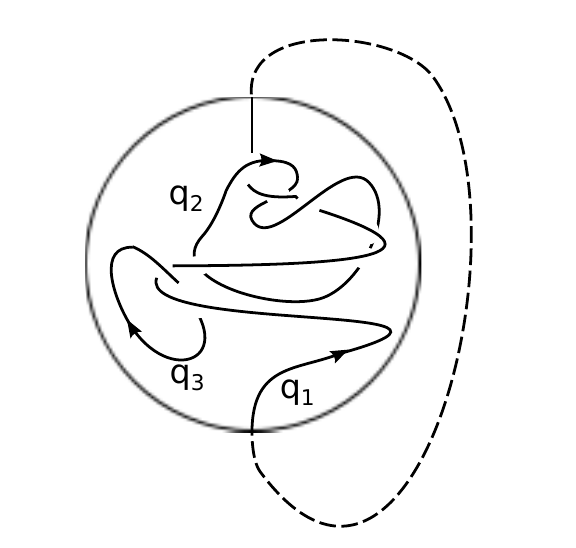}
$$
This is an example of a three-component link with colors $q_1$, $q_2$, and $q_3$.
\end{fig}

\begin{defn}
\label{DeltaM}
Let $\mathbb{T}$ be the set of oriented tangles written like elements in the tangle planar algebra with loose strands having endpoints on the boundary of a disk.  Define $\Delta_m:\mathbb{T}\rightarrow\PA$ to be the map from oriented tangles to $\PA$ that resolves positive and negative crossings as follows:
$$
\begin{matrix}{}
\includegraphics[scale=.4]{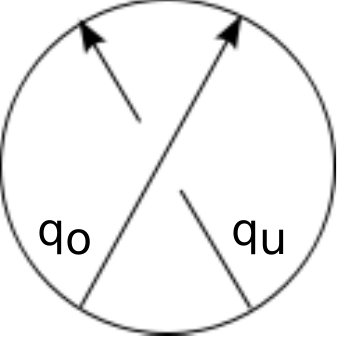}
\end{matrix} =
q_o\begin{matrix}{}
\includegraphics[scale=.4]{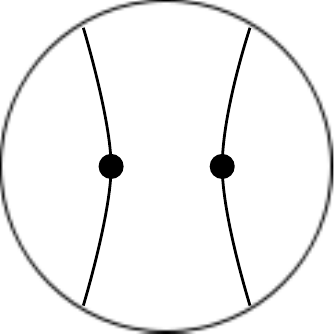}
\end{matrix} 
+
q_o\begin{matrix}{}
\includegraphics[scale=.4]{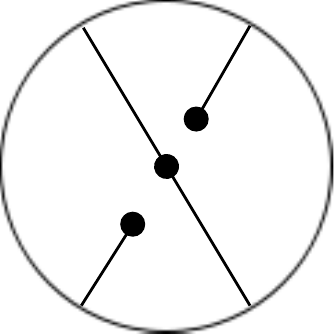}
\end{matrix} 
+
(q_u-q_u^{-1})\begin{matrix}{}
\includegraphics[scale=.4]{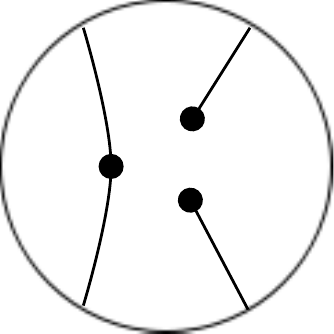}
\end{matrix} 
+
q_o^{-1}\begin{matrix}{}
\includegraphics[scale=.4]{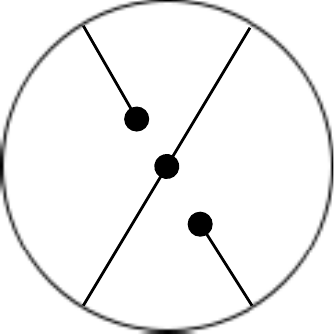}
\end{matrix} 
-
q_o^{-1}\begin{matrix}{}
\includegraphics[scale=.4]{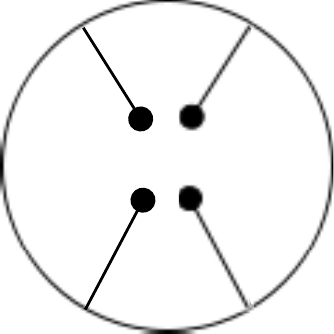}
\end{matrix} 
$$
$$\begin{matrix}{}
\includegraphics[scale=.4]{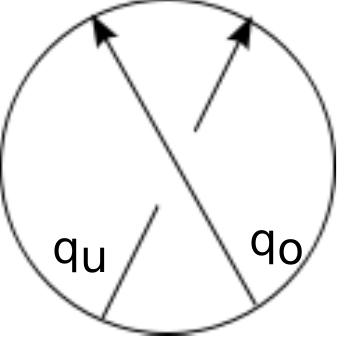}
\end{matrix}=
q_o^{-1}\begin{matrix}{}
\includegraphics[scale=.4]{DBasis1423.pdf}
\end{matrix} 
+
q_o^{-1}\begin{matrix}{}
\includegraphics[scale=.4]{DBasis13cc.pdf}
\end{matrix} 
-
(q_u-q_u^{-1})\begin{matrix}{}
\includegraphics[scale=.4]{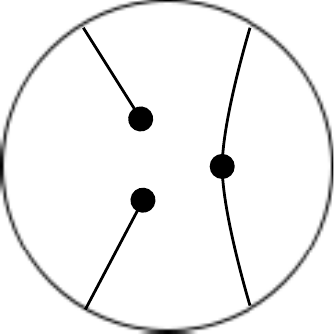}
\end{matrix} 
+
q_o\begin{matrix}{}
\includegraphics[scale=.4]{DBasis24cc.pdf}
\end{matrix} 
-q_o
\begin{matrix}{}
\includegraphics[scale=.4]{DBasis0.pdf}
\end{matrix} 
\hskip.03in.
$$
\end{defn}

When we let the same variable be associated to every component then we have Lemmas 5.1-5.4 from \cite{Bigelow}:

\begin{lem}
\label{LemRM}
The map $\Delta_m$ on a knot or link where the same variable, $q$, is associated to each component satisfies the skein relation
$$\begin{matrix}{}
\includegraphics[scale=.4]{PosCross.pdf}
\end{matrix}
- \begin{matrix}{}
\includegraphics[scale=.4]{NegCross.pdf}
\end{matrix}=(q-q^{-1}) \begin{matrix}{}
\includegraphics[scale=.4]{UnCross.pdf}
\end{matrix}
$$
and all Reidemeister II and III moves and the following versions of Reidemeister I:
$$
 \begin{matrix}{}
\includegraphics[scale=.4]{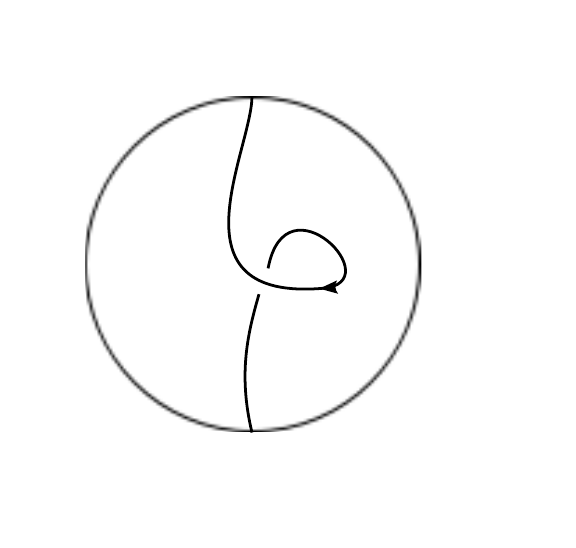}
\end{matrix}
=
 \begin{matrix}{}
\includegraphics[scale=.4]{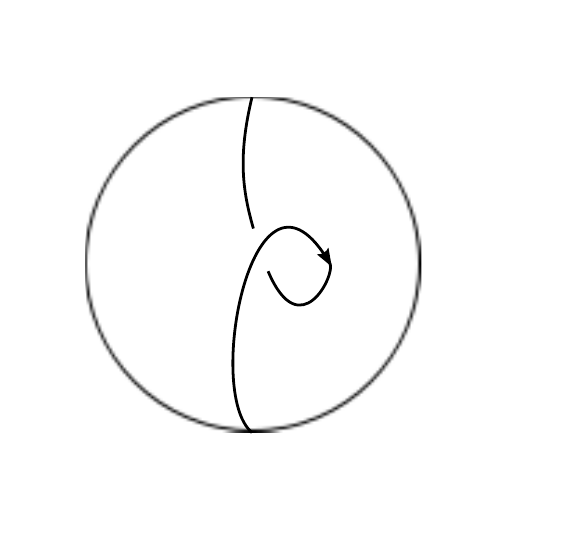}
\end{matrix}
= -q^{-1}
 \begin{matrix}{}
\includegraphics[scale=.4]{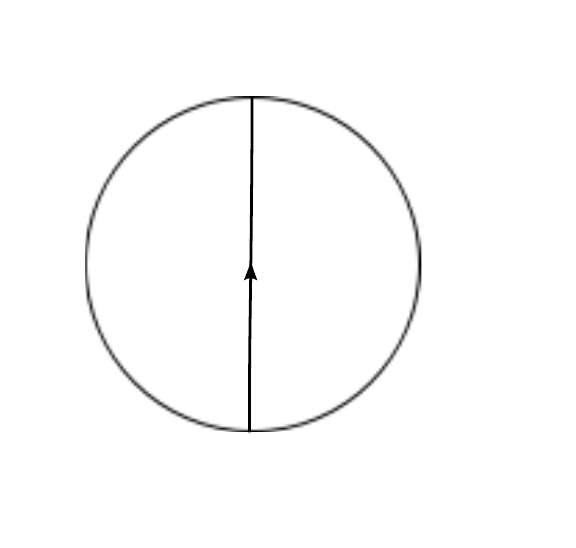}
\end{matrix}
$$
$$
 \begin{matrix}{}
\includegraphics[scale=.4]{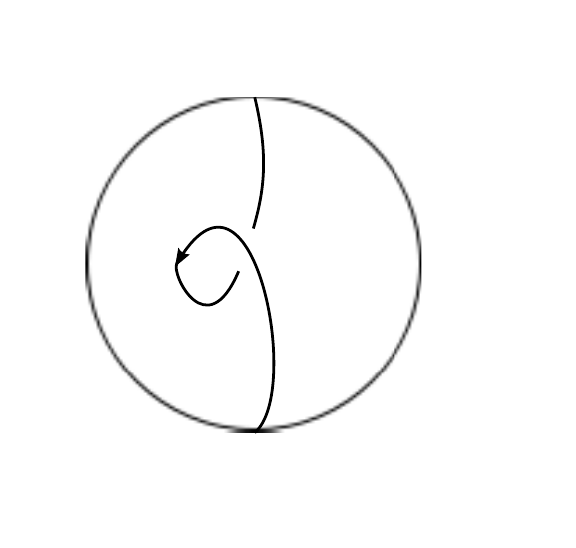}
\end{matrix}
=
 \begin{matrix}{}
\includegraphics[scale=.4]{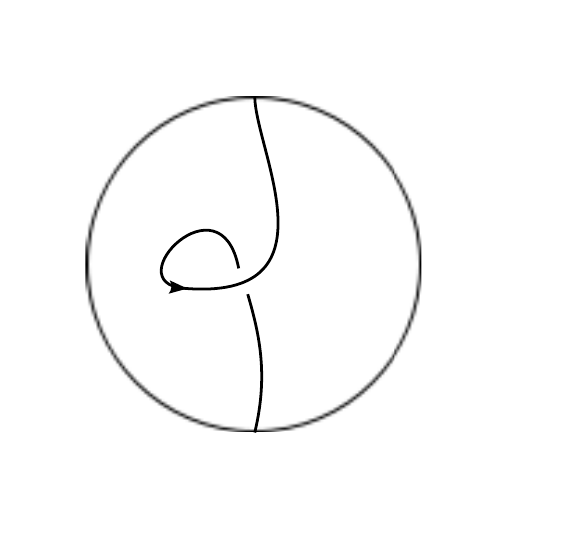}
\end{matrix}
= -q
 \begin{matrix}{}
\includegraphics[scale=.4]{R1nc.pdf}
\end{matrix}
.
$$
\end{lem}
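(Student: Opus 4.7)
The plan is to verify each of the four claims by substituting the definition of $\Delta_m$ from Definition \ref{DeltaM} and simplifying inside $\PA$ using its defining relations together with Lemma \ref{LemBig3.4}. Since every component carries the same variable $q$, we have $q_o = q_u = q$ throughout, so the five-term expansions of a positive and a negative crossing share the same underlying basis diagrams and are well matched for pairwise comparison.

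For the skein relation I would subtract the expansion of the negative crossing from that of the positive crossing term by term. Comparing coefficients on each basis diagram, the contributions weighted by $q_o^{\pm 1}$ on matching diagrams combine either to zero or to something proportional to the uncrossed resolution, while the terms carrying the factor $(q_u - q_u^{-1}) = (q-q^{-1})$ survive; repackaging the survivors as $(q-q^{-1})$ times the image under $\Delta_m$ of the uncrossed two-strand tangle finishes this part.

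For the Reidemeister moves I would compose the appropriate resolutions in $\PA$ and expand. Since the acknowledgments cite Polyak's minimal generating set of Reidemeister moves, it is enough to check one representative of R2, one of R3, and one of each R1 version. The R2 and R3 expansions produce sums over tuples of basis diagrams, and the relations of $\PA$ do most of the simplification: terms containing a closed loop vanish by the first relation, terms containing a doubly-dotted broken strand vanish by the first half of Lemma \ref{LemBig3.4}, and doubly-dotted through-strands collapse to a single dot by the second half. The surviving terms should cancel pairwise, leaving the identity on the relevant strands. For the two R1 identities, closing off one endpoint of a crossing yields at most five terms, most of which contain a closed loop or closed dotted loop and so drop out; the survivors collapse to $-q^{\pm 1}$ times a single strand, with the sign of the exponent tracked by whether the closure is on the left or right.

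The main obstacle is the R3 verification, where naively each side is a sum over $5^3 = 125$ diagrams. I expect the cleanest approach is to invoke Polyak's reduction so that only a single R3 variant needs a full expansion, and then to organize the surviving terms on each side by the isotopy class of the underlying planar shape rather than by the crossing that produced them, so coefficients can be read off and compared directly. An alternative route, if bookkeeping becomes too heavy, is to deduce R3 from R2 together with the skein relation, since in a planar algebra that already satisfies R2 the skein relation typically reduces R3 to a small algebraic identity in $q$.
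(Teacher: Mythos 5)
Your main line of attack is the same as the paper's: the paper disposes of this lemma by citing Bigelow's Lemmas 5.1--5.4, and its own multivariate analogues (Lemmas \ref{LemR2} and \ref{LemR3} in Section \ref{TI}) are proved exactly as you propose --- expand each crossing into the five-term resolution, kill most of the $25$ (resp.\ $125$) resulting diagrams with the closed-loop relation and Lemma \ref{LemBig3.4}, and match the survivors after rewriting via the dotted-strand definition, with Polyak's generating set reducing the check to one version each of R2 and R3. Your skein computation is also right: with $q_o=q_u=q$ the difference of the two resolutions is $(q-q^{-1})$ times the four diagrams that constitute the dotted-strand expansion of two uncrossed strands.

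One caution: your fallback for R3 --- ``deduce R3 from R2 together with the skein relation'' --- does not work. In this planar algebra the skein relation plus R2 only yields the Hecke-type quadratic relation $\sigma_i^2 = (q-q^{-1})\sigma_i + 1$ for the resolved crossings, and a quadratic relation on each generator does not imply the braid relation $\sigma_1\sigma_2\sigma_1=\sigma_2\sigma_1\sigma_2$; the braid relation is an independent axiom, which is precisely why the paper (and Bigelow) carry out the full $125$-term verification for R3. So if the bookkeeping gets heavy you must still do the direct expansion (or the matrix computation of Section \ref{MVAP}); there is no shortcut through the skein relation alone.
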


Lemma \ref{LemRM} tells us that up to a normalizing coefficient, the resolution of crossings in Definition \ref{DeltaM} is a single variable tangle invariant that satisfies the skein relation for the Alexander polynomial.  Restricting ourselves to oriented knots and links in $T_2$,  the slight inconsistency with the first Reidemeister move can be dealt with by a simple normalizing coefficient.  Define the \emph{turning number} of a knot or link, $T$, to be the number of positively oriented loops minus the number of negatively oriented loops denoted $\tau(T)$.  This brings us to Bigelow's main result, Theorem 5.5 from \cite{Bigelow}.

\begin{thm}
\label{Thm5.2Big}
Where each strand has the same color, define
$$
\Delta'(T)=(-q)^{-\tau(T)}\Delta_m(T).
$$
If $T$ is an oriented tangle in $T_2$, then $\Delta'(T)$ is the Alexander polynomial of $\hat{T}$, the closure of $T$.
\end{thm}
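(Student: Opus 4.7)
The plan is to use the classical characterization of the Alexander polynomial as the unique link invariant satisfying the skein relation $\Delta(L_+)-\Delta(L_-)=(q-q^{-1})\Delta(L_0)$ together with the normalization $\Delta(\textrm{unknot})=1$. I would verify three properties for $\Delta'$ in turn: invariance under all three Reidemeister moves, the Conway skein relation, and the correct value on the unknot. Lemma \ref{LemRM} does almost all of the heavy lifting; the role of the prefactor $(-q)^{-\tau(T)}$ is precisely to absorb the anomaly in Reidemeister I.

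For invariance, Lemma \ref{LemRM} already gives invariance of $\Delta_m$ under R2 and R3, and the turning number $\tau$ is manifestly unaffected by either move since the underlying plane curve's rotation number does not change. So only R1 needs attention. Writing $-q^{\pm1}$ as $(-q)^{\pm1}$, the R1 coefficients of the lemma — $(-q)^{-1}$ for a left-side kink and $(-q)^{+1}$ for a right-side kink — are exactly cancelled once one checks that the stated definition of $\tau$ (positive loops minus negative loops) changes by $-1$ under addition of a left kink and by $+1$ under addition of a right kink, regardless of the sign of the new crossing. Combining these, $(-q)^{-\tau(T)}\Delta_m(T)$ is unchanged by all R1 moves and therefore descends to an invariant of $\hat{T}$.

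For the skein relation, Lemma \ref{LemRM} already says $\Delta_m$ satisfies it, so it suffices to check that the normalizing factor agrees on the three diagrams. Since $L_+$, $L_-$, and $L_0$ differ only inside a small disk and share the same boundary tangent data along the disk, their underlying oriented planar curves have the same total rotation number, hence $\tau(L_+)=\tau(L_-)=\tau(L_0)$. The common prefactor therefore passes through the skein relation and $\Delta'$ inherits it.

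For normalization, I would evaluate $\Delta'$ on a specific unknot representative, say a 1-tangle in $T_2$ that closes to a single oriented round circle. No crossings need resolving, so $\Delta_m$ sends it to the identity strand in $P_2$; reading off the coefficient and multiplying by $(-q)^{-\tau(\textrm{unknot})}$ should give $1$, matching the Alexander polynomial of the unknot. With invariance, skein, and normalization in hand, the uniqueness theorem for the Alexander polynomial delivers $\Delta'(T)=\Delta(\hat{T})$. The main obstacle, and really the only non-mechanical step, is the sign-sensitive geometric bookkeeping that verifies the stated definition of $\tau$ produces exactly the $\pm1$ changes under R1 needed to cancel the $-q^{\pm 1}$ factors from Lemma \ref{LemRM}; everything else is then routine.
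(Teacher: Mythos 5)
Your argument is essentially the proof the paper intends (and the one Bigelow gives for his Theorem 5.5, which this paper simply cites rather than reproving): Lemma \ref{LemRM} supplies the skein relation, the R2/R3 invariance, and the R1 anomaly; the factor $(-q)^{-\tau}$ is there precisely to cancel that anomaly; and uniqueness of the Conway-normalized Alexander polynomial finishes the job. The only pieces you leave unverified --- the sign bookkeeping showing a left (resp.\ right) curl changes $\tau$ by $-1$ (resp.\ $+1$), and the unknot normalization read off in $P_2$ --- are exactly the routine computations the cited proof carries out.
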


The normalizing coefficient is a little more complicated for the multivariate version, but not much.  The \emph{rotation number} of a knot with color $q_i$, denoted rot$(q_i)$, is the change in the angle in the counterclockwise direction when following a component in a link projection in the direction of the orientation divided by $2\pi$.
\begin{thm}
\label{MainResult}
Define
$$
\begin{array}{c c c c}
\Delta_m':&T_2&\longrightarrow&P_2\\
&T&\longmapsto& N(T)\Delta_m(T)
\end{array}
$$
where $N(T)$ is the normalizing coefficient
$$
N(T)=\Big(\prod_{\textrm{colors, }q_i}(-q_i)^{-\textrm{rot}(q_i)}\Big)\Big/(q_l-q_l^{-1}),
$$
and $q_l$ is the color of the strand with endpoints on the boundary of the disk.  This map is the multivariate Alexander polynomial for a knot or link.  Moreover, $\Delta_m$ on higher tangle vector spaces gives a tangle invariant up to Reidemeister I.
\end{thm}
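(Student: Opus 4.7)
The proof has two layers, corresponding to the two assertions in the theorem. First I would establish that $\Delta_m$ itself (before any normalization) is invariant under Reidemeister II and III for an arbitrary assignment of colors to components, which already yields the claim that $\Delta_m$ is a tangle invariant on higher $T_{2n}$ up to Reidemeister I. Then I would show that the scalar $N(T)$ exactly compensates for Reidemeister I on $T_2$, and finally identify the resulting scalar-valued invariant on knots and links with Conway's multivariate Alexander polynomial by verifying Murakami's axioms from \cite{JMur}.

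For the first step, I would extend the argument behind Lemma \ref{LemRM} to the multicolored setting. Each crossing resolution in Definition \ref{DeltaM} depends only on the colors $q_o$ and $q_u$ of the locally over- and under-strand, and in any Reidemeister II or III configuration the colors of the strands are preserved throughout the move. So the verification is identical to Bigelow's single-variable check: substitute the resolutions of each crossing, expand the resulting sum of diagrams in $\PA$, and simplify using the defining relations together with the identities in Lemma \ref{LemBig3.4}. The multivariate coefficients $q_o, q_u$ appear symmetrically on both sides and cancel, so no extra input is needed.

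For Reidemeister I, Lemma \ref{LemRM} already shows that a positive or negative curl on a strand of color $q$ introduces a factor of $-q^{\pm 1}$. The rotation number $\text{rot}(q_i)$ records the algebraic sum of these contributions along the component colored $q_i$, so the factor $\prod_i (-q_i)^{-\text{rot}(q_i)}$ in $N(T)$ cancels precisely the RI defect. The denominator $q_l - q_l^{-1}$ is a normalization designed so that the colored unknot evaluates to the standard Alexander value; this can be checked directly by computing $\Delta_m$ on the closure of the identity in $T_2$ and applying the relations in $\PA$.

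The main obstacle is the identification with the multivariate Alexander polynomial of Conway. I would verify Murakami's axiomatic characterization \cite{JMur} one axiom at a time: the value on the colored unknot and on split unions, the behavior under a change of crossing involving two strands of the same color versus two strands of different colors, and the multivariate skein relation. Each of these reduces to a computation in $\PA$ using Definition \ref{DeltaM}, but the bookkeeping is genuinely intricate because the over- and under-colors in the five-term resolution must be tracked carefully through the skein manipulations, and one must also confirm that the normalization $N(T)$ transforms correctly under each axiom (e.g.\ that rotation numbers add the right way across a skein triple). Once Murakami's axioms are checked, uniqueness of the Conway potential function finishes the proof; Sections \ref{TI} and \ref{MVAP} presumably carry this out in detail.
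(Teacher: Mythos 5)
Your overall architecture matches the paper's: invariance of $\Delta_m$ under Reidemeister II and III in all colorings (giving the tangle invariant up to RI), cancellation of the RI defect by the rotation-number factor in $N(T)$, and identification with the Conway potential via Murakami's axiomatic characterization \cite{JMur}. Two points, however, deserve attention. First, your claim that the RII/RIII verification is ``identical to Bigelow's single-variable check'' because ``the multivariate coefficients $q_o, q_u$ appear symmetrically on both sides and cancel'' is asserted rather than argued. In the RIII configuration three strands of potentially distinct colors meet, the two sides of the move produce $125$ diagrams each, and the fifteen surviving terms carry coefficients that are products of different variables; that these match (after applications of the rotational relation) is exactly what must be computed, and the paper does compute it in Lemmas \ref{LemR2} and \ref{LemR3}, invoking Polyak's minimal generating set to reduce to one oriented version of each move. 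Your sketch proposes the right computation but understates what it has to deliver.

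The more substantive gap is Murakami's third axiom. You describe it as ``a computation in $\PA$ with intricate bookkeeping,'' but the axiom is a linear relation among six links that agree everywhere except where they differ by $3$-braids, so a local computation only suffices if you can show that the relevant linear combination of braids resolves to zero in $P_6$ in a way that survives an \emph{arbitrary} closure of the remaining strands. The paper's key device, which your proposal omits, is a matrix representation $\phi$ of the colored braid algebra $\mathbb{C}B_3$ on an $8$-dimensional subspace $V$ of $P_6$ spanned by diagrams $v_1,\dots,v_8$ satisfying $v_k^2=v_k$ and $\rho(e)=\sum_{k=1}^8 v_k$, where $\rho$ is the crossing resolution into $P_6$. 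Multiplicativity of $\phi$ follows from inserting $\rho(e)$ between braid words, and the decomposition of the identity is precisely what converts $\phi(b)=0$ into $\Delta_m'(L_b)=0$ for every link $L_b$ containing $b$ locally. Without this (or an equivalent mechanism), the step from a local identity to a statement about all links is not justified, and the brute-force expansion of the six braid words (on the order of $5^4$ diagrams per term) is not a realistic substitute. This is the main idea of Section \ref{MVAP} and the part of the proof your proposal does not reach.
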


The subject of the next two sections will be proving Theorem $\ref{MainResult}$.  First in Section \ref{TI} we must show that $\Delta_m'$ respects the Reidemeister relations II and III exactly, and in $T_2$, Reidemeister I is satisfied up to the given normalizing coefficient.  Section \ref{MVAP} will be dedicated to showing that this map on $T_2$ satisfies the Murakami relations.

\section{We do have a tangle invariant}
\label{TI}

In order to prove Theorem \ref{MainResult}, we need to show that the Reidemeister moves are satisfied.  Recall the resolutions that we must check satisfy the Reidemeister relations:
$$
\begin{matrix}{}
\includegraphics[scale=.4]{PosCrossOU.pdf}
\end{matrix} =
q_o\begin{matrix}{}
\includegraphics[scale=.4]{DBasis1423.pdf}
\end{matrix} 
+
q_o\begin{matrix}{}
\includegraphics[scale=.4]{DBasis24cc.pdf}
\end{matrix} 
+
(q_u-q_u^{-1})\begin{matrix}{}
\includegraphics[scale=.4]{DBasi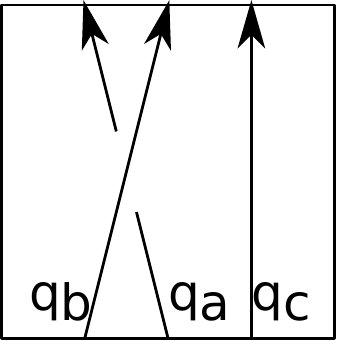}
\end{matrix} 
+
q_o^{-1}\begin{matrix}{}
\includegraphics[scale=.4]{DBasis13cc.pdf}
\end{matrix} 
-
q_o^{-1}\begin{matrix}{}
\includegraphics[scale=.4]{DBasis0.pdf}
\end{matrix} 
$$
$$\begin{matrix}{}
\includegraphics[scale=.4]{NegCrossOU.pdf}
\end{matrix}=
q_o^{-1}\begin{matrix}{}
\includegraphics[scale=.4]{DBasis1423.pdf}
\end{matrix} 
+
q_o^{-1}\begin{matrix}{}
\includegraphics[scale=.4]{DBasis13cc.pdf}
\end{matrix} 
-
(q_u-q_u^{-1})\begin{matrix}{}
\includegraphics[scale=.4]{DBasis23.pdf}
\end{matrix} 
+
q_o\begin{matrix}{}
\includegraphics[scale=.4]{DBasis24cc.pdf}
\end{matrix} 
-q_o
\begin{matrix}{}
\includegraphics[scale=.4]{DBasis0.pdf}
\end{matrix} 
\hskip.03in.
$$

\begin{proof}
Showing that these resolutions satisfy the given versions of Reidemeister I involves only one strand, and this was covered in the proof in \cite{Bigelow}.  For Reidemeister II and III, we must show all different versions since we are dealing with oriented links.  We only need to show one version each of Reidemeister II and III, which we do in Lemmas \ref{LemR2} and \ref{LemR3}.  Together with the versions of Reidemeister I, these Reidemeister moves generate all of the other Reidemeister moves \cite{Polyak}.

\begin{lem}
\label{LemR2}
The following version of Reidemeister II is satisfied in the planar algebra $\PA$:
$$
\begin{matrix}{}
\includegraphics[scale=.4]{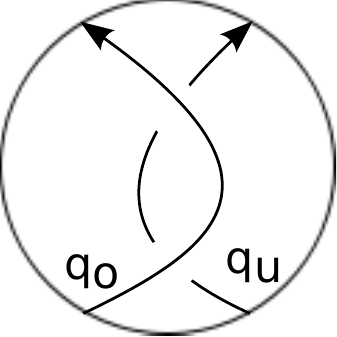}
\end{matrix} 
=
\begin{matrix}{}
\includegraphics[scale=.4]{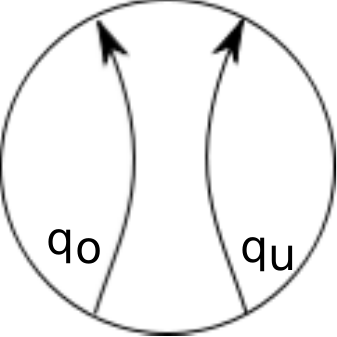}
\end{matrix} 
\hskip.03in.
$$
\end{lem}

\begin{proof}
We must show that the above diagram with crossings would evaluate the same way as if the strands were first slid past each other.  To show this, replace the positive and negative crossings with the linear combination of elements of $\PA$ described above.  We get twenty-five new diagrams, of which only six are not zero by Lemma \ref{LemBig3.4}.  Two diagrams cancel each other out immediately, and we are left with  
$$
\begin{matrix}{}
\includegraphics[scale=.4]{DBasis1423.pdf}
\end{matrix} 
+
\begin{matrix}{}
\includegraphics[scale=.4]{DBasis14.pdf}
\end{matrix} 
+
\begin{matrix}{}
\includegraphics[scale=.4]{DBasis23.pdf}
\end{matrix} 
+
\begin{matrix}{}
\includegraphics[scale=.4]{DBasis0.pdf}
\end{matrix}
$$
each with coefficient one.  After applying the definition of the dotted strand twice to the first term and once to the second, we see that the resolution simplifies to the same diagram with two uncrossed strands.
\end{proof}

The verification for the Reidemeister III move is more tedious, and we prove invariance under our algorithm in Lemma \ref{LemR3}.  

\begin{lem}
\label{LemR3}
The following versions of Reidemeister III are satisfied in the planar algebra $\PA$:
$$
\begin{matrix}{}
\includegraphics[scale=.4]{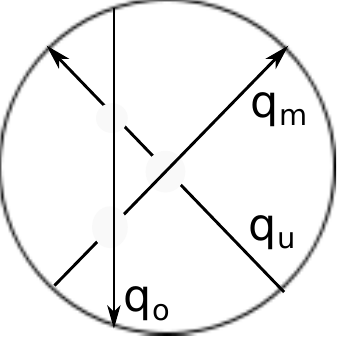}
\end{matrix}
=
\begin{matrix}{}
\includegraphics[scale=.4]{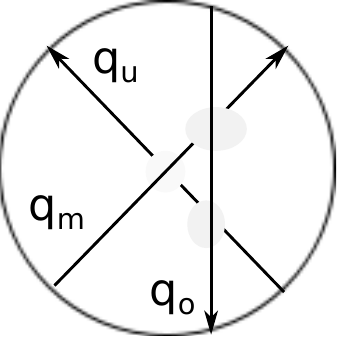}
\end{matrix}
\hskip.03in.
$$
\end{lem}

\begin{proof}
Resolving the crossings of the diagram in this version of Reidemeister III
$$
\begin{matrix}{}
\includegraphics[scale=.4]{R3posLeftOm3a.pdf}
\end{matrix}
\hskip.1in
\textrm{ and }
\hskip.1in
\begin{matrix}{}
\includegraphics[scale=.4]{R3posRightOm3a.pdf}
\end{matrix}
$$
gives a linear combination of 125 diagrams each.  All but fifteen from both linear combinations are zero.  After at most one application of the rotational relation to various diagrams in one linear combination, each of these fifteen diagrams appear in both resolutions with the same coefficients and cancel each other out.  
\end{proof}

After applying the result from Polyak, we conclude that this algorithm is invariant under all Reidemeister moves \cite{Polyak}.

\end{proof}

\section{The algorithm gives the multivariate Alexander polynomial}
\label{MVAP}
We will check that our algorithm gives the multivariate Alexander polynomial by checking the axioms from Murakami's 1993 paper ``A State Model for the Multi-variable Alexander Polynomial" \cite{JMur}.  All of the relations are straightforward except Murakami's third relation, which is a relation in the algebra of colored braids with three strands oriented upward.  We will define this relation using colored versions of $\sigma_1$ and $\sigma_2$ as positive generators and $e$ is the ``identity," or three strands with no crossings.  In the braid group multiplication from left to right should be interpreted as stacking from top to bottom.

\begin{defn}
Murakami's six axioms for a function $\Delta$ on knots and links to be the multivariate Alexander polynomial are:
\begin{enumerate}
\item The single variable skein relation for two strands with the same color.
\item $$(q_aq_b+q_a^{-1}q_b^{-1})\hskip.05in
\begin{matrix}{}
\includegraphics[scale=.4]{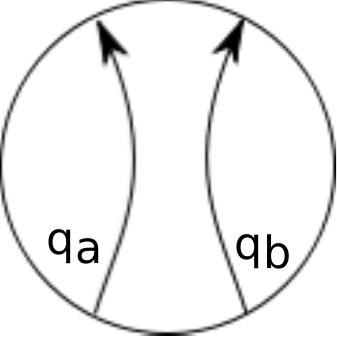}
\end{matrix}
=
\begin{matrix}{}
\includegraphics[scale=.4]{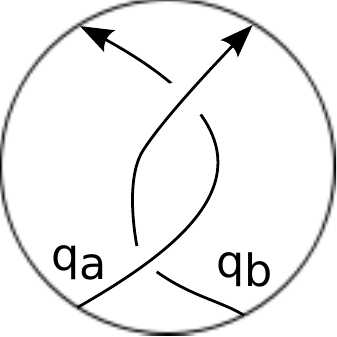}
\end{matrix}
+
\begin{matrix}{}
\includegraphics[scale=.4]{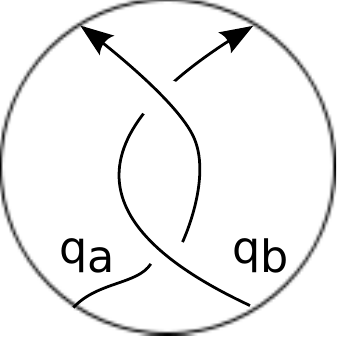}
\end{matrix}
$$

\item  Define $g_+(x)=x+x^{-1}$ and $g_-(x)=x-x^{-1}$.  From left to right along the bottom the colors of the strands in the algebra of colored braids are $q_a$, $q_b$, and $q_c$.  When we write $\Delta$ of a braid in brackets, we mean $\Delta$ of a link with that braid in it.  Each of the links in this relation is exactly the same everywhere except for locally differing by these braids.
$$
\begin{matrix}{}
g_+(q_c)g_-(q_b)\Delta([\sigma_1\sigma_2\sigma_2\sigma_1])-g_-(q_b)g_+(q_a)\Delta([\sigma_2\sigma_1\sigma_1\sigma_2])-\\
g_-(q_c^{-1}q_a)\big[\Delta([\sigma_1\sigma_1\sigma_2\sigma_2])+\Delta([\sigma_2\sigma_2\sigma_1\sigma_1])\big]+g_-(q_c^{-1}q_bq_a)g_+(q_a)\Delta([\sigma_2\sigma_2])\\
-g_+(q_c)g_-(q_cq_bq_a^{-1})\Delta([\sigma_1\sigma_1])-g_-(q_c^{-2}q_a^2)\Delta([e])\\
=\textrm{ ZERO }
\end{matrix}
$$
\item If $L$ is the trivial knot with color $q_a$, then $\Delta(L)=\frac{1}{q_a-q_a^{-1}}$.
\item
$$
\begin{matrix}{}
\includegraphics[scale=.4]{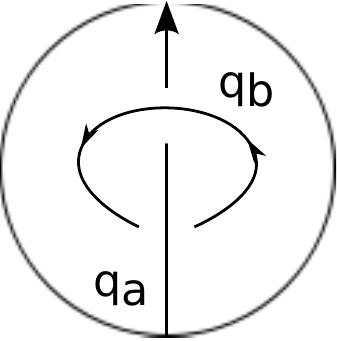}
\end{matrix}
=(q_a-q_a^{-1})\hskip.05in
\begin{matrix}{}
\includegraphics[scale=.4]{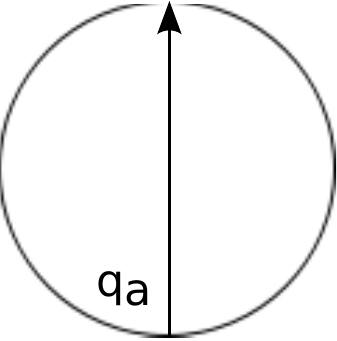}
\end{matrix}
$$
\item If $L$ is the split union of a link and trivial knot, then $\Delta(L)$ is zero.
\end{enumerate}
\end{defn}

\begin{proof}
These axioms are equally applicable to a 1-tangle, which we can write as an almost closed braid with only the stand having endpoints on the boundary of the disk left unclosed.  Indeed, the first three relations do not involve the portion of any strand connecting the top of the braid to the bottom of the braid.  Relations 4-6 can be applied to any disjoint union of connected sums of Hopf links written as a 1-tangle to reduce the tangle to zero or a polynomial times the unclosed strand.  So the multivariate Alexander polynomial of any 1-tangle is evaluable using Murakami's relations, and the algorithm is independent of the choice of how to write the knot or link as a 1-tangle \cite{JMur}.  

It is straightforward to check most of these relations.  Relation 1 is the single variable skein relation shown by Bigelow \cite{Bigelow}.  Relation 4 is satisfied by the normalizing coefficient.  
Relation 6 comes directly from the first relation of the planar algebra that a closed loop is zero.  Relations 2 and 5 are no more difficult than the second Reidemeister move and can be checked by hand.

Proving Murakami's third relation is significantly harder.  We must define a representation on $\mathbb{C}B_3$, the algebra of colored braids with three strands.  Let $b$ be a linear combination of colored braids in $\mathbb{C}B_3$, and $L_b$ will represent a linear combination of knots or links that are identical everywhere except where they differ locally according to $b$.  We must define a representation, $\phi$, so that $\phi(b)=0$ implies that $\Delta_m'(L_b)=0$.  It might be worth noting a difference in convention here.  Murakami has all of his braids oriented downward, whereas ours are oriented upward.  We hope pointing this out directly will avoid some frustration over convention.

\begin{defn}
Let $\mathbb{C}B_3$ be the algebra of colored braids.  That is to say braids where each strand has a color or label from $\{q_a,q_ b,q_ c\}$.  So there are now six versions of $\sigma_1$:
$$
\sigma_1^1=\begin{matrix}{}
\includegraphics[scale=.3]{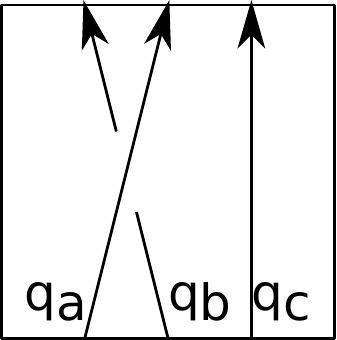}
\end{matrix}\hskip.01in,\hskip.05in
\sigma_1^2=\begin{matrix}{}
\includegraphics[scale=.3]{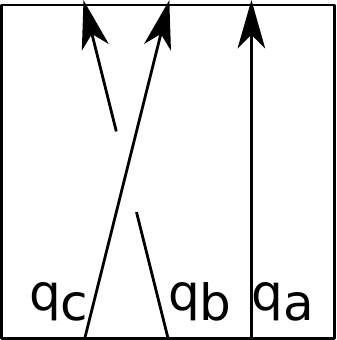}
\end{matrix}\hskip.01in,\hskip.05in
\sigma_1^3=\begin{matrix}{}
\includegraphics[scale=.3]{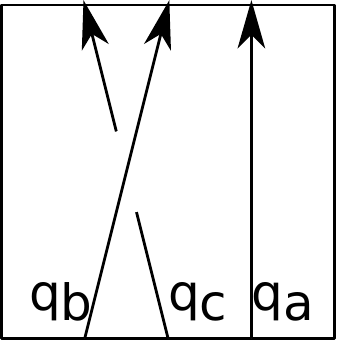}
\end{matrix}\hskip.01in,\hskip.05in
\sigma_1^4=\begin{matrix}{}
\includegraphics[scale=.3]{s14.pdf}
\end{matrix}\hskip.01in,\hskip.05in
\sigma_1^5=\begin{matrix}{}
\includegraphics[scale=.3]{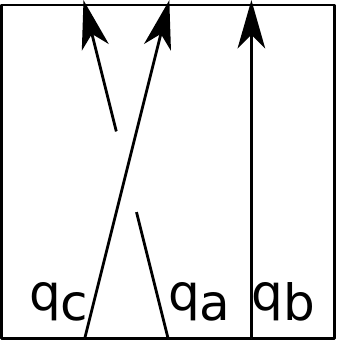}
\end{matrix}\hskip.01in,\hskip.05in
\sigma_1^6=\begin{matrix}{}
\includegraphics[scale=.3]{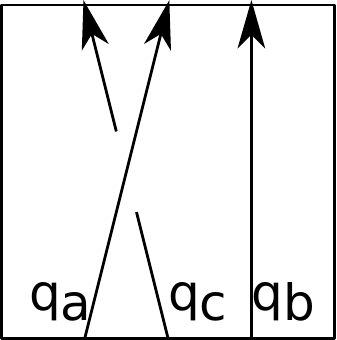}
\end{matrix}
\hskip.03in.
$$

\noindent The second generator follows the same conventions; $\sigma_2^1$ is labeled $q_a$, $q_b$, and $q_c$ from left to right across the bottom, $\sigma_2^2$ is labeled $q_c$, $q_b$, $q_a$, and so on.

Formally, you can stack any diagrams on top of one another, but the braid relations only exist for braids with a consistent coloring throughout the strand.  This includes the ``identity," $e$. 
\end{defn}

Let $P_6$ be the sixth vector space of $\PA$.  Define $V$ to be the subspace of $P_6$ generated by basis elements:
$$
v_1=\begin{matrix}{}
\includegraphics[scale=.4]{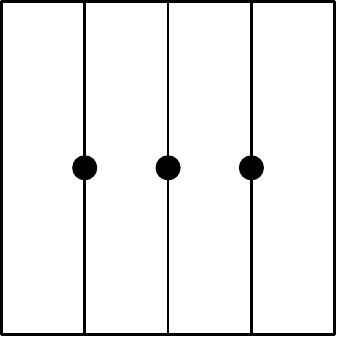}
\end{matrix}\hskip.02in,\hskip.1in
v_2=\begin{matrix}{}
\includegraphics[scale=.4]{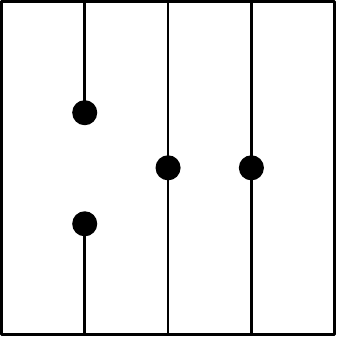}
\end{matrix}\hskip.02in,\hskip.1in
v_3=\begin{matrix}{}
\includegraphics[scale=.4]{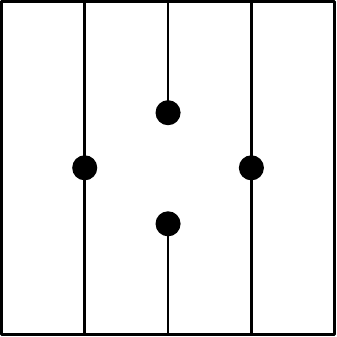}
\end{matrix}\hskip.02in,\hskip.1in
v_4=\begin{matrix}{}
\includegraphics[scale=.4]{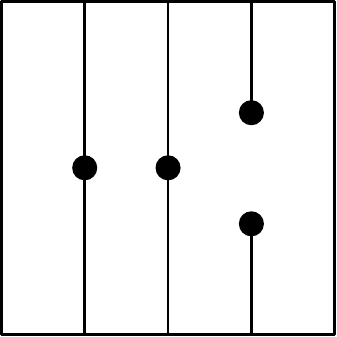}
\end{matrix}\hskip.02in,\hskip.1in
$$

$$
v_5=\begin{matrix}{}
\includegraphics[scale=.4]{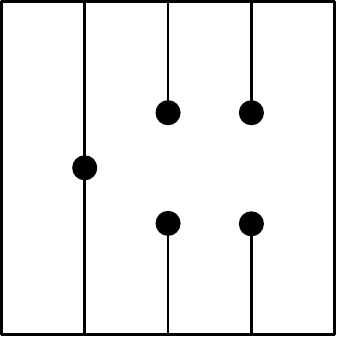}
\end{matrix}\hskip.02in,\hskip.1in
v_6=\begin{matrix}{}
\includegraphics[scale=.4]{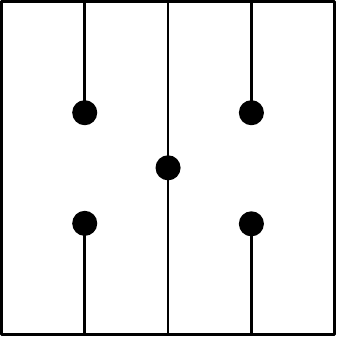}
\end{matrix}\hskip.02in,\hskip.1in
v_7=\begin{matrix}{}
\includegraphics[scale=.4]{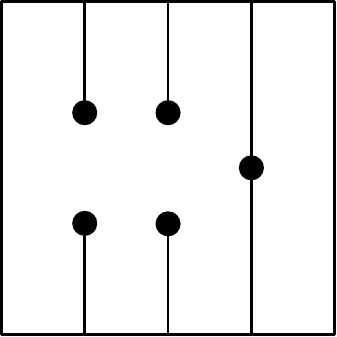}
\end{matrix}\hskip.02in,\hskip.1in
v_8=\begin{matrix}{}
\includegraphics[scale=.4]{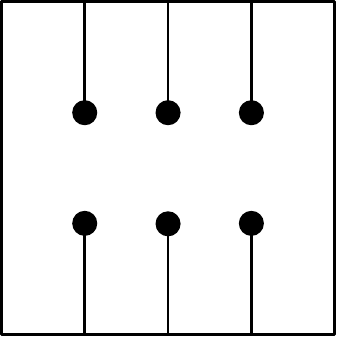}
\end{matrix}
\hskip.03in.
\hskip.1in
$$

The representation $\phi$ will be defined by resolving crossings of a braid in $\mathbb{C}B_3$ and sending it into $P_6$ and then into $M_8(V,\mathbb{C})$.  Call the map from $\mathbb{C}B_3$ into $P_6$ that resolves crossings, $\rho$.  For all $b\in\mathbb{C}B_3$, $\rho(v_ibv_j)$ is equal to a coefficient, $b_{i,j}$, times a single diagram.  This is because with the given relations in $P_6$, there is only one way of connecting the strands of $v_i$ and $v_j$ by Lemma \ref{LemBig3.4}.  The representation $\phi$ is defined by:
$$
(\phi(b))_{i,j}=b_{i,j}.
$$

Since $b_{i,j}=0$ if $v_i$ and $v_j$ have a different number of dotted strands also by Lemma \ref{LemBig3.4}, the matrix $\phi(b)$ is a nice block matrix.  The matrix for $\sigma_1^1$ is
$$
\left(
\begin{array}{c c c c c c c c}
q_a&0&0&0&0&0&0&0\\
0&0&q_a^{-1}&0&0&0&0&0\\
0&q_a&q_b-q_b^{-1}&0&0&0&0&0\\
0&0&0&q_a&0&0&0&0\\
0&0&0&0&q_b-q_b^{-1}&q_a&0&0\\
0&0&0&0&q_a^{-1}&0&0&0\\
0&0&0&0&0&0&-q_a^{-1}&0\\
0&0&0&0&0&0&0&-q_a^{-1}\\
\end{array}
\right).
$$
All of the versions of $\sigma_1$ are of the same form.  Changing the coloring of the strands only permutes the variables.  So the matrix $\phi(\sigma_1^2)$ is
$$
\left(
\begin{array}{c c c c c c c c}
q_c&0&0&0&0&0&0&0\\
0&0&q_c^{-1}&0&0&0&0&0\\
0&q_c&q_b-q_b^{-1}&0&0&0&0&0\\
0&0&0&q_c&0&0&0&0\\
0&0&0&0&q_b-q_b^{-1}&q_c&0&0\\
0&0&0&0&q_c^{-1}&0&0&0\\
0&0&0&0&0&0&-q_c^{-1}&0\\
0&0&0&0&0&0&0&-q_c^{-1}\\
\end{array}
\right).
$$\\
The other versions of the generator $\sigma_2$ are calculated in the same way, and $\sigma_2^1$, which is labeled along the bottom with strands colored $q_a$, $q_b$, and $q_c$, is
$$
\left(
\begin{array}{c c c c c c c c}
q_b&0&0&0&0&0&0&0\\
0&q_b&0&0&0&0&0&0\\
0&0&0&q_b^{-1}&0&0&0&0\\
0&0&q_b&q_c-q_c^{-1}&0&0&0&0\\
0&0&0&0&-q_b^{-1}&0&0&0\\
0&0&0&0&0&q_c-q_c^{-1}&q_b&0\\
0&0&0&0&0&q_b^{-1}&0&0\\
0&0&0&0&0&0&0&-q_b^{-1}\\
\end{array}
\right).
$$

We must show $(\phi(uv))_{i,j}=(\phi(u)\phi(v))_{i,j}$ for $u$, $v\in\mathbb{C}B_3$.  To do this, first note that $\rho(e)$ in $P_6$ is equal to $\Sigma_{k=1}^8v_k$ and $v_k^2=v_k$ for all $k$.  Then the $i,j$ entry of $\phi(uv)$ is equal to the coefficient of $v_iu(id)vv_j=\Sigma_{k=1}^8v_iuv_kvv_j=\Sigma_{k=1}^8v_iuv_k^2vv_j$.  So $(\phi(uv))_{i,j}=\Sigma_{k=1}^8(\phi(u))_{i,k}(\phi(v))_{k,j}$, or the $i$th row of $\phi(u)$ dotted with the $j$th column of $\phi(v)$, which is $(\phi(u)\phi(v))_{i,j}$.  So the proposed representation of $\mathbb{C}B_3$ is multiplicative.

To finish showing that we have a representation of of $\mathbb{C}B_3$, we need to check the third Reidemeister move holds.  In the uncolored braid group, this relation is written $\sigma_1\sigma_2\sigma_1=\sigma_2\sigma_1\sigma_2$.  We must make sure to use the appropriate versions of the generators so that the two braids have a consistent coloring along the strands.  Labeling the bottom of the strands from left to right $q_a$, $q_b$, and $q_c$, we must verify that $\phi(\sigma_1^3\sigma_2^4\sigma_1^1)=\phi(\sigma_2^5\sigma_1^6\sigma_2^1)$.  Calculating the appropriate versions of $\sigma_1$ and $\sigma_2$, the third Reidemeister move is easy to check in Mathematica.  This shows that we have a representation of the colored braid group.

Suppose $L_b$ is a linear combination of knots or links that are identical except locally where they differ by the terms in a linear combination of braids in $\mathbb{C}B_3$, $b$.  Further suppose that  $\phi(b)=0$.  That $\Delta_m'(L_b)=0$ follows from the fact $\rho(e)=\Sigma_{k=1}^8v_k$:

$$
\Delta_m'(L_b)=
\Delta_m'\left( \begin{matrix}{}
\includegraphics[scale=.35]{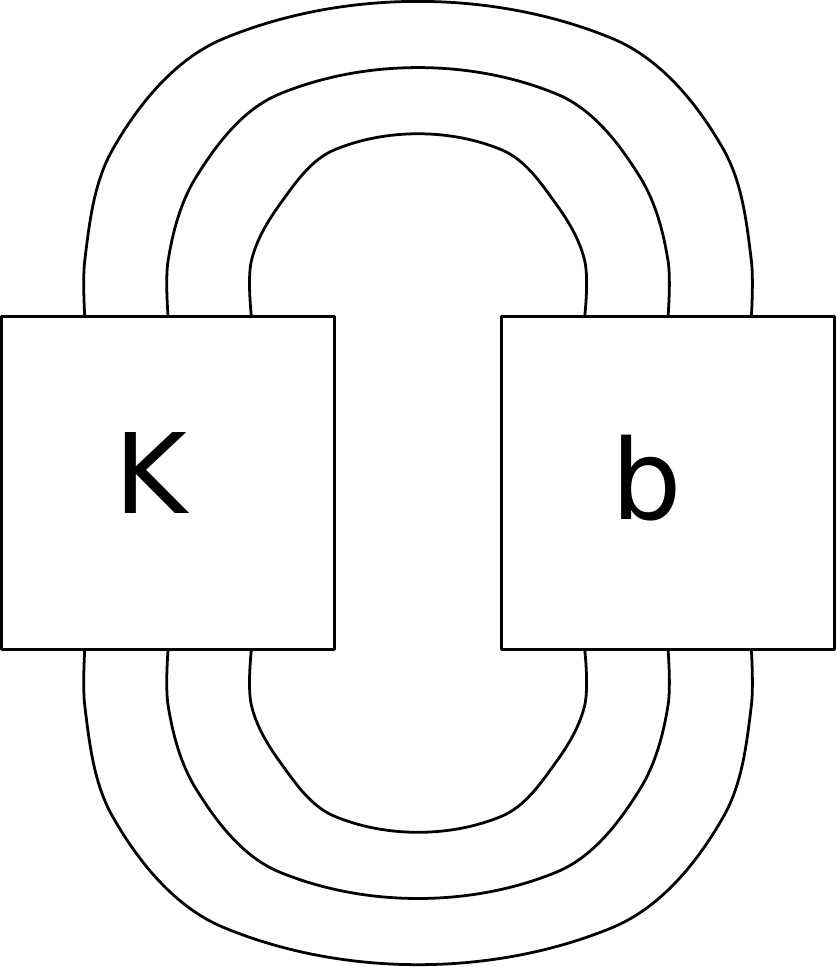}
\end{matrix}
\right)
=
\Delta_m'\left( \begin{matrix}{}
\includegraphics[scale=.35]{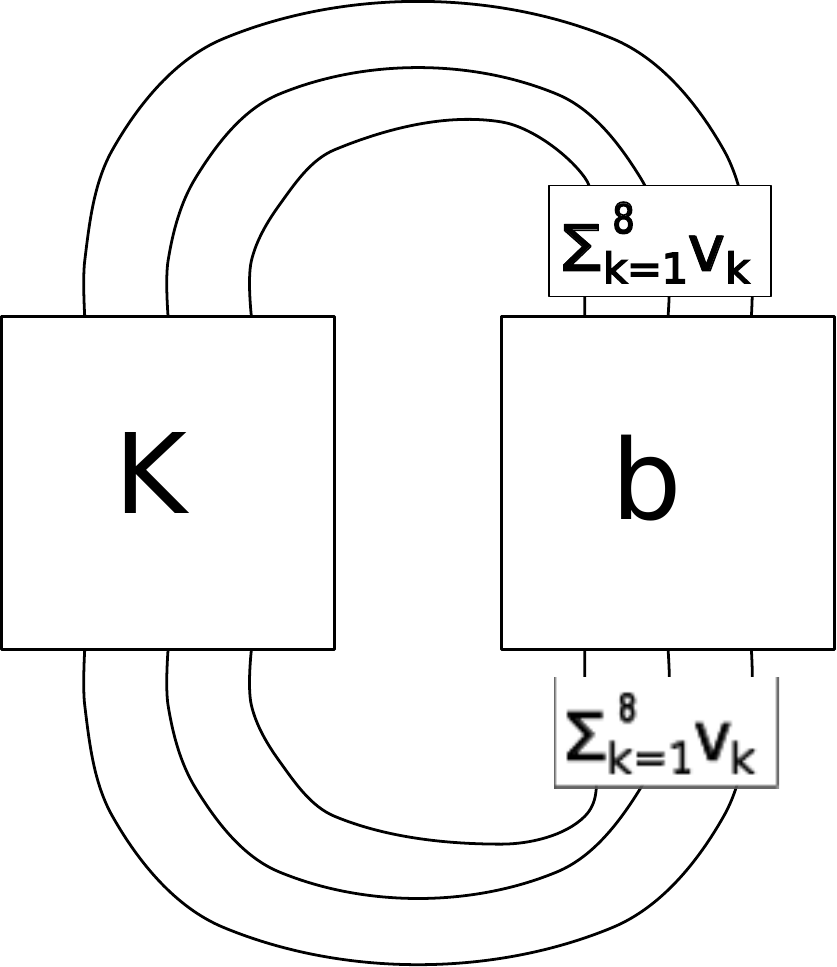}
\end{matrix}
\right)
=
\Delta_m'(0)
=0
\hskip.03in.
$$

We have a representation of $\mathbb{C}B_3$ with the desired property.  Checking Murakami's third relation is now an easy problem for Mathematica to do, and we see that the representation of the linear combination of braids in Murakami's third relation is zero.  I will include this Mathematica notebook on my website http://www.math.ucsb.edu/$\sim$kgracekennedy/.  We have shown the Murakami relations hold and that we have an algorithm to calculate the multivariate Alexander polynomial of a link.

\end{proof}

\section{Conclusion}
\label{Conc}
The resolutions of crossings defined in this paper give a new way to calculate the multivariate Alexander polynomial of a link. The algorithm generalizes nicely to a tangle invariant up to Reidemeister I.  The tangle invariant is not a single polynomial but rather a linear combination of diagrams with coefficients that are polynomials.  There are several open questions about this invariant and the planar algebra, such as is this the same tangle invariant as the one given by Archibald in \cite{ArchThesis}.  There is strong evidence to suggest that it is.

Anyone interested in planar algebras would probably like to know if this has any relation to subfactor theory.  The planar algebra $\PA$ is not a subfactor planar algebra, but it is closely related to the Temperley-Lieb planar algebra.  For example, if we take a diagram in $\PA$ with no dots, we can ``thicken" the strands.  Broken strands  become shaded caps and cups, and through strands, or strands with one endpoint on the top and one on the bottom, become two strands that bound a shaded region.  For instance,

$$
\begin{matrix}{}
\includegraphics[scale=.4]{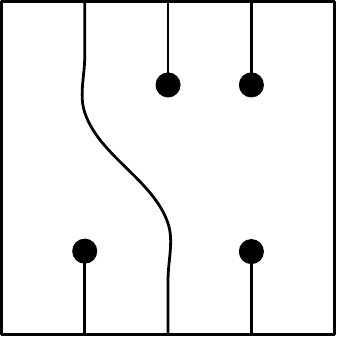}
\end{matrix}
\hskip.1in\textrm{ becomes }\hskip.1in
\begin{matrix}
\includegraphics[scale=.4]{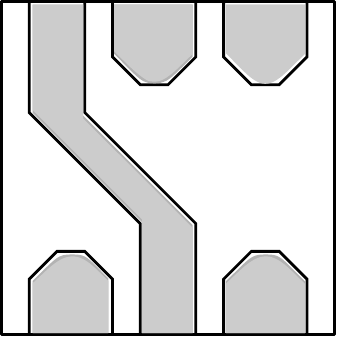}\hskip.1in.
\end{matrix}
$$
Diagrams with $n$ boundary points on the top and bottom with only broken strands and through strands form a subalgebra of the Temperley-Lieb algebra with $n$ strands \cite{Evans}.  These diagrams came up in the recent work of Jones and Penneys on infinite index subfactors.  These subalgebras of the Temperley-Lieb algebras always appear injectively in the standard invariant of finite and infinite index subfactors \cite{Dave}.  It is still unknown if there exists an infinite index subfactor for which the standard invariant is exactly these subalgebras \cite{Dave}, \cite{DaveInfInd}.

\bibliography{MAP_PA_Kennedy_Arxiv}

\end{document}